\title{A Note on Cores and Quasi Relative Interiors in Partially Finite Convex Programming}
\author{Scott B. Lindstrom\\Hong Kong Polytechnic University}
\date{\today}
\def\RR{\hbox{$\mathbb R$}}
\def\d{\hbox{\rm ds}}
\def\W{\hbox{$\mathcal W$}}
\def\boldx{\hbox{$\mathbf{x}$}}
\def\inte{\hbox{\rm int}}
\newcommand{\qri}{\ensuremath{\operatorname{qri}}}
\newcommand{\reli}{\ensuremath{\operatorname{ri}}}
\newcommand{\cone}{\ensuremath{\operatorname{cone}}}
\newcommand{\cl}{\ensuremath{\operatorname{cl}}}
\newcommand{\ent}{\ensuremath{\operatorname{ent}}}
\providecommand{\core}{\operatorname{core}}
\newcommand{\dom}{\ensuremath{\rm dom}}
\newcommand{\aff}{\ensuremath{\rm aff}}
\newtheorem{theorem}{Theorem}[section]
\newtheorem{lemma}[theorem]{Lemma}
\newtheorem{proposition}[theorem]{Proposition}
\theoremstyle{definition}
\newtheorem{definition}[theorem]{Definition}
\newtheorem{example}{\it Example}
\newtheorem{remark}[theorem]{Remark}
\newenvironment{varsubequations}[1]
{%
	\addtocounter{equation}{-1}%
	\begin{subequations}
		\def\@currentlabel{#1}%
	}
	{%
	\end{subequations}\ignorespacesafterend
}
\begin{document}
	
	\maketitle
	
	\begin{abstract}
		The problem of minimizing an entropy functional subject to linear constraints is a useful example of partially finite convex programming. In the 1990s, Borwein and Lewis provided broad and easy-to-verify conditions that guarantee strong duality for such problems. Their approach is to construct a function in the quasi-relative interior of the relevant infinite-dimensional set, which assures the existence of a point in the core of the relevant finite-dimensional set. We revisit this problem, and provide an alternative proof by directly appealing to the definition of the core, rather than by relying on any properties of the quasi-relative interior. Our approach admits a minor relaxation of the linear independence requirements in Borwein and Lewis' framework, which allows us to work with certain piecewise-defined moment functions precluded by their conditions. We provide such a computed example that illustrates how this relaxation may be used to tame observed Gibbs phenomenon when the underlying data is discontinuous. The relaxation illustrates the understanding we may gain by tackling partially-finite problems from both the finite-dimensional and infinite-dimensional sides. The comparison of these two approaches is informative, as both proofs are constructive.
	\end{abstract}

	\section{Introduction}
	
	In the 1990s, Borwein and Lewis introduced the \emph{quasi-relative interior} ($\qri$) of a set $C$ \cite{borwein1988partiallyparts1and2}, defined by
	\begin{equation}
	\qri C = \{x \in C\; \text{such\;that}\; \cl \cone (C-x) \;\text{is\;a\;subspace} \}.
	\end{equation}
	The notion of the quasi-relative interior specifies to that of the relative interior ($\reli$) on $\RR^n$, while $\qri C \supset \reli C$ holds in a Hausdorff topological vector space of arbitrary dimension. Borwein and Lewis demonstrated the value of the quasi-relative interior in the context of the partially finite convex programming problem of constrained entropy optimization \cite{borwein1993partially,borwein1992entropy}, which we now recall. 
	
	Let $f$ be a proper, closed convex function. We define its associated entropy functional by
	\begin{align*}
	I_f : L^1([0,\tau]) &\rightarrow \mathbb{R} \\
	\text{by} \quad I_f : x &\mapsto \int_{0}^{\tau} f(x(s)) ds.
	\end{align*}
	Some commonly employed entropy functions and their conjugates are listed in \cite[Table~1]{borwein1992entropy}, some of which we recall in Table~\ref{tab:entropies}.
	\begin{table}\small
		\begin{tabular}{l l l}
			name & definition & conjugate \\ \hline
			$L^2$ Norm & $\frac{1}{2}u^2$& $\frac12v^2$ \\
			{\scriptsize (negative)} Boltzmann--Shannon & $u\log(u)$ & $\exp(v-1)$ \\
			{\scriptsize (negative translated)} Boltzmann--Shannon & $u\log(u)-u =:\ent(u)$ & $\exp(v)$ \\
			Burg & $-\log(u)$ & $-1-\log(-v)$ \\
			Cosh & $\cosh(u)$ & ${\rm arcsinh}(v)-\sqrt{1+v^2}$ \\
			Fermi Dirac & $u\log(u)+(1-u)\log(1-u)$ & $\log(1+\exp(v))$ \\
		\end{tabular}
	\caption{A selection of entropies that includes those from \cite{borwein1992entropy}.}\label{tab:entropies}
	\end{table}
	Borwein and Lindstrom have recently illuminated the role that real principal branch of the Lambert $\W$ function plays in the convex conjugates for weighted sums of entropies \cite{BL2016}; such sums admit new entropies. The entropy optimization problem is to minimize $I_f$ subject to the $n$ continuous linear constraints of the form
	\begin{equation*}
	\langle a_k,x \rangle = \int_0^\tau a_k(s)x(s) \d = b_k, \quad k=1,\dots,n.
	\end{equation*}
	We may describe these linear equality constraints concisely in terms of the linear operator $A$ as follows:
	\begin{align}
	A:L^1([0,\tau]) &\rightarrow \mathbb{R}^n \nonumber \\
	\text{by}\quad A: x & \mapsto \left(\int_0^\tau a_1(s)x(s) \d , \dots , \int_0^\tau a_n(s)x(s) \right)=b\nonumber \\
	\text{where}\quad b:= A\rho, \label{d:A}
	\end{align}
	where $\rho, a_k \in L^\infty ([0,\tau])$ and $\rho$ is a given function used to generate the linear equality constraint vector $b$. When $f^*$ is smooth and everywhere finite on $\RR$, the problem
	\begin{equation}\label{Iprimalproblem}
	\underset{x\in L^1}{\inf}\{I_f(x) \;|\; Ax=b \}
	\end{equation}
	reduces to solving the finite nonlinear equation
	\begin{equation}\label{optimalmultipliers}
	\int_0^\tau (f^*)' \left(\sum_{j=1}^n \mu_j a_j(s) \right) a_k(s) \d = b_k \quad (1\leq k \leq n).
	\end{equation}
	We will recall the reasons for this in Section~\ref{conjugateduality}, and additional information about primal attainment may be found in \cite{BL2}. 
	
	The structure of the chosen entropy function $f$ may impose hard constraints on the solution. For example, when $f=\ent$ is the negative, translated Boltzmann--Shannon entropy, the feasible region consists of nonnegative functions. In a follow-up to \cite{BL2016}, Bauschke and Lindstrom computed entropy optimization problems using \emph{proximal} averages of entropy functions \cite{BL2018}. In particular, they showed how proximal averages allow the practitioner to exercise flexibility with the barriers that entropies afford, in order to obtain a larger feasible region.
	
	\subsection{Outline}
	
	The remainder of this paper is outlined as follows. In Section~\ref{conjugateduality}, we recall the circumstances under which \eqref{Iprimalproblem} is equivalent to \eqref{optimalmultipliers}, excepting a detailed discussion of how to verify the strong duality. In Section~\ref{s:newresults}, we provide our new approach to developing easy-to-verify sufficient conditions for strong duality, by appealing directly to the definition of the core. In Section~\ref{s:existingtheory}, we briefly recall the classical approach that relies on finding a function in a quasi-relative interior. In Section~\ref{s:example}, we provide a computed example that illustrates the minor relaxation we have obtained. We conclude in Section~\ref{s:conclusion}.

	\section{On the equivalence of the dual problem and the Lagrange multiplier problem}\label{conjugateduality}\index{conjugate duality}
	
	This discussion closely follows the one found in \cite{ScottPhD}, which is an augmented version of those in  \cite{BL2018,BL2016}. To see why solving \eqref{Iprimalproblem} reduces to solving \eqref{optimalmultipliers}, we will first recall several results.

		\begin{remark}\label{rem:partialdomain}
		Let $f:X \rightarrow \left]-\infty,+\infty \right[$ be proper. Then $f^*:X \rightarrow \left]-\infty,+\infty \right]$ is proper. Let $x,u \in X$. Then it holds that
		$$
		u \in \partial f^*(x) \iff f(u)+f^*(x)= \langle x,u \rangle \iff x \in \partial f(u).
		$$
		For details, see \cite[Proposition 16.9]{BC}. Thus it holds that
		$$
		{\rm ran} (\partial f^*) \subset {\rm dom}(\partial f) \subset {\rm dom}(f).
		$$
		Consequently, we have that:
		$$
		(\forall x \in X)\quad f^*(x) \in {\rm dom}(f) = \left [0,\infty \right[.
		$$
	\end{remark}

	We next recall a theorem (Theorem~\ref{thm:borweinzhuduality}) from \cite[Theorem 4.7.1]{BZ} that provides sufficient conditions for strong duality to hold. For the present exposition, we are particularly interested in how to verify those sufficient conditions. We will introduce an easy-to-check criterion in Section~\ref{s:newresults}, and then we will compare it with the existing theory in Section~\ref{s:existingtheory}. Our process uses the definition of the \emph{core}, which we now introduce.
	\begin{definition}[Core of a set]\label{def:core}
	The \emph{core} of a set $C\subset \RR^n$, denoted by $\core C$, is the set of points $x \in C$ such that for any direction vector $\eta \in \RR^n$, $x+t\eta \in C$ for all $t>0$ sufficiently small.
	\end{definition}
	
	\begin{theorem}[{\cite[Theorem 4.7.1]{BZ}}]\label{thm:borweinzhuduality}
		Let $X$ be a Banach space and $F: X \rightarrow \mathbb{R}\cup \{+\infty\}$ be a lower semicontinuous convex function. Let $A: X \rightarrow \mathbb{R}^n$ be a linear operator, and $b \in {\; \rm core}(A {\; \rm dom}F)$. Then
		\begin{equation*}
			\inf_{x\in X}\left\{ F(x)\; |\; Ax=b \right\} = \max_{\varphi \in \mathbb{R}^N}\left\{\langle \varphi, b\rangle -F^*(A^T \varphi) \right\},
		\end{equation*}
		where $A^T$ denotes the adjoint map that satisfies
		\begin{align*}
			A^T: \mathbb{R}^n \rightarrow X\nonumber \quad \text{by} \quad \langle Au,\varphi \rangle_{\mathbb{R}^n} = \langle u,A^T \varphi \rangle_X.
		\end{align*}
	\end{theorem}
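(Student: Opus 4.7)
The plan is the standard value-function approach to convex duality. Introduce the perturbational value function
\[
v: \mathbb{R}^n \to [-\infty, +\infty], \quad v(y) := \inf_{x \in X}\{F(x) : Ax = b + y\}.
\]
Since $F$ is convex and $A$ is linear, $v$ is convex, and its effective domain is $\dom v = A\,\dom F - b$. The hypothesis $b \in \core(A\,\dom F)$ is therefore just $0 \in \core(\dom v)$. The primal infimum is $v(0)$, while a direct unpacking gives
\[
v^{*}(\varphi) = \sup_{x \in X}\{\langle \varphi, Ax - b\rangle - F(x)\} = F^{*}(A^{T}\varphi) - \langle \varphi, b\rangle,
\]
so the dual problem equals $\sup_{\varphi}\{-v^{*}(\varphi)\} = -v^{**}(0)$. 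Weak duality is then the routine Fenchel--Young inequality $v(0) \geq v^{**}(0)$.

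For strong duality and dual attainment, the key is to show that $v$ is lower semicontinuous at $0$. If $v$ takes the value $-\infty$ somewhere then, by convexity and the finiteness of $v(0)$ (which follows from $b \in A\,\dom F$), both primal and dual values are $-\infty$ and the theorem holds trivially. Otherwise $v$ is proper. Because we are in $\mathbb{R}^n$, the core of the convex set $\dom v$ coincides with its interior, so $0 \in \inte(\dom v)$. A proper convex function on $\mathbb{R}^n$ is bounded above on a neighborhood of any interior point of its domain and is therefore continuous (indeed locally Lipschitz) there. Consequently $v^{**}(0) = v(0)$, which is precisely strong duality.

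To upgrade the dual $\sup$ to a $\max$, I would observe that a finite convex function continuous at $0$ has nonempty subdifferential there. Picking $\varphi_{0} \in \partial v(0)$ yields $v(y) \geq v(0) + \langle \varphi_{0}, y\rangle$ for all $y$, so $v^{*}(\varphi_{0}) = -v(0)$ and $\varphi_{0}$ attains the dual maximum. The main obstacle is the passage from the core hypothesis to continuity of $v$ at $0$: the finite-dimensional facts that core equals interior and that proper convex functions are continuous on the interior of their domain make this clean, but it is exactly the finiteness of the codomain $\mathbb{R}^n$ of $A$ that makes the core condition strong enough to deliver lower semicontinuity. This is why the theorem is phrased with a $\core$ condition on the image side, and why, in the setting of the paper, one still has substantive work to do to \emph{verify} that condition for concrete choices of $F$ and $A$.
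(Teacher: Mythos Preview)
The paper does not supply its own proof of this theorem; it is quoted verbatim as \cite[Theorem~4.7.1]{BZ} and used as a black box, with the paper's contribution being the verification of the hypothesis $b\in\core(A\,\dom I_f)$ in Sections~\ref{s:newresults} and~\ref{s:existingtheory}. So there is no in-paper argument against which to compare your attempt.

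That said, your sketch is essentially the standard perturbation/value-function proof one finds in Borwein--Zhu: reduce strong duality to $v(0)=v^{**}(0)$, use that in $\RR^n$ core equals interior for convex sets so $0\in\inte(\dom v)$, and invoke continuity of a proper convex function on the interior of its domain to get both $v=v^{**}$ at $0$ and $\partial v(0)\neq\emptyset$ for dual attainment. One small point worth tightening: in the degenerate case you mention, your phrase ``the finiteness of $v(0)$'' is being used only to mean $v(0)<+\infty$; what you actually need there is the core hypothesis to pull $-\infty$ from some $y_0$ back to $0$ along a segment through a point $-\varepsilon y_0\in\dom v$, forcing $v(0)=-\infty$. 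In that event the equality $\inf=\sup=-\infty$ holds, but the ``$\max$'' in the statement is then a convention rather than an attained value; most treatments (including \cite{BZ}) either adopt that convention or tacitly restrict to the case where the primal value is finite. Otherwise your argument is correct and matches the route taken in the cited reference.
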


	Our new contribution in Section~\ref{s:newresults} is an easy-to-verify sufficient condition to apply Theorem~\ref{thm:borweinzhuduality}. For now, we suppose that the problem from \eqref{Iprimalproblem} satisfies the sufficient conditions to apply Theorem~\ref{thm:borweinzhuduality}, and so we may reformulate \eqref{Iprimalproblem} as
	\begin{equation}\label{Idualproblem}
		\inf_{x\in L^2}\left\{ I_f(x) \;|\; Ax=b \right\} = \max_{\varphi \in \mathbb{R}^N}\left\{\langle \varphi, b\rangle -(I_f)^*(A^T \varphi) \right\},
	\end{equation}
	where $A^T: \mathbb{R}^n \rightarrow L^1$ is the adjoint map satisfying the equality
	\begin{align}
		\langle Au,\varphi \rangle_{\mathbb{R}^n} = \langle u,A^T \varphi \rangle_{L^2}. \label{adjoint}
	\end{align}
	
	To further simplify this, we use the following result from \cite[Theorem 6.3.4]{BV}.
	\begin{proposition}\label{prop:Ifduality}
		If	 $I_f$ is defined as above and $f:\mathbb{R}\rightarrow \left]-\infty,\infty \right]$ is convex, proper, closed, we have
		\begin{equation*}
			(I_f)^*=I_{f^*}.
		\end{equation*}
	\end{proposition}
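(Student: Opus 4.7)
The plan is to establish the two inequalities $(I_f)^* \leq I_{f^*}$ and $(I_f)^* \geq I_{f^*}$ separately. The first is routine and follows from a pointwise application of the Fenchel--Young inequality; the second is the substantive step and rests on a measurable selection argument exploiting the decomposability of $L^1([0,\tau])$.

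For the easy direction, fix $y$ in the dual space with the pairing $\langle x,y \rangle = \int_0^\tau x(s)y(s)\,ds$. For every $x \in L^1([0,\tau])$, the pointwise Fenchel--Young inequality gives $x(s)y(s) - f(x(s)) \leq f^*(y(s))$ for almost every $s$. Integrating over $[0,\tau]$ and then taking the supremum over $x \in L^1([0,\tau])$ yields $(I_f)^*(y) \leq I_{f^*}(y)$.

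For the reverse direction, first assume $I_{f^*}(y) < \infty$ and fix $\varepsilon > 0$. Since $f$ is proper, closed, and convex, for each $s \in [0,\tau]$ the set
\[
\Gamma(s) := \bigl\{ v \in \mathbb{R} : vy(s) - f(v) \geq f^*(y(s)) - \varepsilon/\tau \bigr\}
\]
is nonempty, and I would invoke the Kuratowski--Ryll-Nardzewski selection theorem to extract a Borel measurable $v : [0,\tau] \to \mathbb{R}$ with $v(s) \in \Gamma(s)$ for almost every $s$. Integrating the defining inequality gives
\[
\int_0^\tau \bigl[ v(s)y(s) - f(v(s)) \bigr]\,ds \geq I_{f^*}(y) - \varepsilon.
\]

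The main obstacle is to promote this measurable selection to an element of $L^1([0,\tau])$ without losing substantially in the integral. This is precisely where the decomposability of $L^1([0,\tau])$ enters. Fixing $u_0 \in \dom f$, I would set $v_N(s) := v(s)$ when $|v(s)| \leq N$ and $v_N(s) := u_0$ otherwise; then each $v_N$ lies in $L^\infty([0,\tau]) \subset L^1([0,\tau])$, and a dominated convergence argument, using the pointwise majorant $f^*(y(\cdot))$ on the relevant integrand, gives
\[
\int_0^\tau \bigl[ v_N(s)y(s) - f(v_N(s)) \bigr]\,ds \;\longrightarrow\; \int_0^\tau \bigl[ v(s)y(s) - f(v(s)) \bigr]\,ds
\]
as $N \to \infty$. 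Taking $N$ large and then $\varepsilon \to 0$ yields $(I_f)^*(y) \geq I_{f^*}(y)$. The degenerate case $I_{f^*}(y) = +\infty$ is handled by truncating the selection at increasingly generous thresholds $f^*(y(s)) - k$ on $\{f^*\circ y \leq k\}$ to drive the supremum to infinity. Combining the two inequalities yields $(I_f)^* = I_{f^*}$.
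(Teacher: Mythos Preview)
The paper does not actually prove this proposition; it simply cites it as \cite[Theorem~6.3.4]{BV} and moves on. So there is nothing to compare against in the paper itself.

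Your argument is the standard Rockafellar-style proof for conjugates of integral functionals, and the skeleton is sound: Fenchel--Young for the easy inequality, then measurable selection plus decomposability of $L^1$ for the hard one. Two places deserve a bit more care. First, your dominated-convergence step needs a two-sided integrable bound on $v_N(s)y(s)-f(v_N(s))$; you name only the upper majorant $f^*(y(\cdot))$. The lower bound is available too---on $\{|v|\le N\}$ the integrand is at least $f^*(y(s))-\varepsilon/\tau$, and on $\{|v|>N\}$ it equals $u_0 y(s)-f(u_0)$, which is bounded since $y\in L^\infty$---but you should state it. Second, the $I_{f^*}(y)=+\infty$ case is only gestured at; a clean way is to note that your construction already yields, for each $k$, an $x_k\in L^1$ with $\langle x_k,y\rangle - I_f(x_k)\ge \int_0^\tau \min\{f^*(y(s)),k\}\,ds - 1$, and the right-hand side tends to $+\infty$ by monotone convergence. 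With those two clarifications the proof is complete.
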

	
	Proposition~\ref{prop:Ifduality} allows us to express the dual problem from the right hand side of \eqref{Idualproblem} explicitly as follows:
	\begin{equation}\label{Idualproblem2}
		\max_{\varphi \in \mathbb{R}^N}\left\{\langle \varphi, b\rangle - I_{f^*}(A^T \varphi) \right\}=\max_{\varphi \in \mathbb{R}^N}\left\{\langle \varphi, b\rangle -\int_0^\tau (f^* \circ A^T \varphi)(s){\; \rm d}s \right\}.
	\end{equation}
	We may express the adjoint map $A^T$ from \eqref{adjoint} explicitly as follows:
	\begin{align}
		\sum_{k=1}^{n}\left(\varphi_k\int_{0}^{\tau}a_k(s)u(s){\; \rm d}s\right)&=\langle Au,\varphi \rangle_{\mathbb{R}^n}\nonumber \\
		&=\langle u,A^T \varphi \rangle_{L^2}=\int_{0}^{\tau}(A^T\varphi)(s)u(s){\; \rm d}s. \label{e:adjoint}
	\end{align}
	From \eqref{e:adjoint}, it is clear why we may explicitly describe $A^T \varphi$ by:
	\begin{equation*}A^T\varphi = \sum_{k=1}^{n}\varphi_k a_k(s).\end{equation*}
	Thus solving \eqref{Idualproblem2} reduces to finding $\varphi \in \mathbb{R}^n$ that maximizes
	\begin{equation}\label{wecansubdifferentiate}
		\sum_{k=1}^{n}\varphi_k b_k - \int_{0}^{\tau}f^*\left(\sum_{k=1}^{n}\varphi_k a_k(s) \right){\; \rm d}s.
	\end{equation}
	We can subdifferentiate \eqref{wecansubdifferentiate} and recover its maximum by finding the values of $\varphi_1,\dots,\varphi_n$ for which the subdifferential of \eqref{wecansubdifferentiate} with respect to $\varphi$ is zero. For that purpose, we recall the Fenchel--Young \emph{inequality} of a convex function $f$,
	\begin{equation}\label{fenchelyounginequality}
	(\forall x, y \in X)\quad 0 \leq f(x)+f^*(y)-\langle x, y \rangle,
	\end{equation}
	which follows from the definition of the conjugate, and we also recall the Fenchel--Young \emph{equality} in the following Lemma.
	\begin{lemma}\label{subgradientlemma}
		For a convex function $f$, $y\in \partial f(x)$ if and only if
		\begin{equation}\label{fenchelyoungequality}
		0=f(x)+f^*(y)-\langle y,x \rangle.
		\end{equation}
	\end{lemma}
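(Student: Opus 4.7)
The plan is to prove both implications by unpacking the definitions of the conjugate $f^*$ and the subdifferential $\partial f$, and combining them with the Fenchel--Young inequality \eqref{fenchelyounginequality} that has just been recorded. Because the statement is essentially definitional, each direction reduces to a single rearrangement; the only subtlety is recognizing that the supremum defining $f^*(y)$ is attained at the point $x$ precisely when \eqref{fenchelyoungequality} holds.

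For the forward implication, I would assume $y \in \partial f(x)$, so that $f(z) \geq f(x) + \langle y, z-x\rangle$ for every $z \in X$. Rearranging yields $\langle y, z\rangle - f(z) \leq \langle y, x\rangle - f(x)$ for every $z$, and taking the supremum over $z$ on the left gives $f^*(y) \leq \langle y, x\rangle - f(x)$. The Fenchel--Young inequality \eqref{fenchelyounginequality} supplies the reverse bound $f^*(y) \geq \langle y, x\rangle - f(x)$, so both hold with equality, which is \eqref{fenchelyoungequality}.

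For the reverse implication, I would assume $f(x) + f^*(y) - \langle y, x\rangle = 0$. The supremum definition of $f^*$ gives $f^*(y) \geq \langle y, z\rangle - f(z)$ for every $z \in X$. Substituting $f^*(y) = \langle y, x\rangle - f(x)$ and rearranging produces $f(z) \geq f(x) + \langle y, z - x\rangle$ for every $z$, which is precisely the defining inequality for $y \in \partial f(x)$.

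There is no real obstacle here: both directions are one-line manipulations once the definitions are in hand, and the Fenchel--Young inequality \eqref{fenchelyounginequality} that pins down the reverse bound in the forward direction was recorded immediately before the statement. The proof does not require any convexity or closedness hypothesis beyond what guarantees that $f^*$ is well defined, so no separate argument is needed for the closed convex setting in which the lemma will be applied.
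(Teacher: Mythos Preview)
Your proof is correct and follows essentially the same approach as the paper. The only cosmetic difference is that the paper presents the argument as a single chain of equivalences (rewriting \eqref{fenchelyoungequality} via the definition of $f^*$ until it becomes the subgradient inequality), whereas you split it into two implications and invoke the Fenchel--Young inequality \eqref{fenchelyounginequality} explicitly for one bound; the underlying manipulations are identical.
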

	\begin{proof}
		From the definition of the conjugate, \eqref{fenchelyoungequality} is simply
		\begin{equation*}
		0=-f(x)+\langle y,x \rangle -\sup_{u\in X}\{\langle y,u \rangle - f(u)\}.
		\end{equation*}
		This is equivalent to
		\begin{equation*}
		f(x)-\langle y,x \rangle = -\sup_{u\in X}\{\langle y,u \rangle - f(u)\}= \inf_{u\in X}\{f(u)-\langle y,u \rangle \},
		\end{equation*}
		which is equivalent to
		\begin{equation*}
		\langle y,u-x \rangle \leq f(u)-f(x)\text{ for all }u\in X.
		\end{equation*}
		This is equivalent to $y \in \partial f(x)$.
	\end{proof}
	Because $(I_f )^*=I_{f^*}$, we have that 
	\begin{equation*}
		(y\in \partial I_f(x))\;\; \iff \;\; 0 = I_f (x)+I_{f^*}(y) -\langle y,x \rangle = \int{ f (x(s))+f^*(y(s))  - x(s)y(s)} {\; \rm d}s.
	\end{equation*}
	By the Fenchel--Young \emph{equality} \eqref{fenchelyounginequality}, the integrand on the right is nonnegative, and so must be zero almost everywhere. Using Lemma~\ref{subgradientlemma}, we have that
	\begin{equation*}
		f (x(s))+f^*(y(s))  - x(s)y(s) = 0 \quad \iff y(s) \in \partial f (x(s)).
	\end{equation*}
	Thus we subdifferentiate with respect to each $\varphi_k$ in \eqref{wecansubdifferentiate} and set the subdifferential equal to zero, which yields the $n$ equations:
	\begin{equation*}
		0= b_k -  \int_{0}^{\tau}(f^*)'\left(\sum_{j=1}^{n}\varphi_j a_j(s) \right)a_k(s){\; \rm d}s \quad \text{for}\;\;k=1,\dots,n.
	\end{equation*}
	Thus we have reduced solving \eqref{Iprimalproblem} to the challenge of solving Equation~\eqref{optimalmultipliers}. When $\mu_1,\dots,\mu_n$ are the optimal multipliers in \eqref{optimalmultipliers}, the solution $x$ to the primal problem \eqref{Iprimalproblem} is 
	\begin{equation}\label{attainment}
	x(s) = \left(f^*\right)' \left(\sum_{j=1}^n \varphi_j a_j(s) \right).
	\end{equation}
	Sufficient conditions for attainment of this primal solution are given in \cite{BL2}. Now we will introduce our process for verifying sufficient conditions for strong duality.
	
	\section{Approach using the core}\label{s:newresults}\index{strong duality (sufficient conditions for entropy minimization)}
	
	This section is largely based on previously unpublished material from \cite{ScottPhD}. Our goal is to provide easy-to-check sufficient conditions to apply Theorem~\ref{thm:borweinzhuduality} with $F:= I_f$. 
	
	First, to see that $I_f$ is convex, notice that, $(\forall x,y \in L^1)\;(\forall \lambda \in \left[0,1\right])$,
	\begin{align*}
	I_f(\lambda x + (1-\lambda)y) &= \int_0^\tau f(\lambda x(s)+(1-\lambda)y(s)) ds\\
	&\leq \int_0^\tau \lambda f(x(s))+(1-\lambda)f(y(s)) ds\\
	&=\lambda \int_0^\tau f(x(s))ds + (1-\lambda) \int_0^\tau f(y(s)) ds,
	\end{align*}
	where the inequality follows from the convexity of $f$. Having shown the convexity of $I_f$, we need only a convenient way to verify that $b \in \core (A \dom I_f)$. The following description of the domain of $I_f$ is from \cite{ScottPhD}.
	
	 \begin{lemma}\label{lem:S}
		Let $[\alpha,\beta] \subset \dom f$ with $f$ convex. Let
		\begin{equation*}
		\mathbf{S}_f := \left \{x \in L^p([0,\tau]) \quad \text{such that} \quad \; \; x(s) \in [\alpha,\beta]\; s-a.e.   \right \}.
		\end{equation*}
		Then $\mathbf{S}_f \subset \dom I_f$.
	\end{lemma}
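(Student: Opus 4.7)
The plan is to show that $f$ is bounded on the compact interval $[\alpha,\beta]$, after which the conclusion will follow from a direct $L^\infty$-type estimate on the integrand of $I_f(x) = \int_0^\tau f(x(s))\,ds$. Boundedness on $[\alpha,\beta]$ is really the entire content of the lemma; everything else is bookkeeping.

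For the upper bound, I would exploit convexity directly. Any $x \in [\alpha,\beta]$ may be written $x = \lambda\alpha + (1-\lambda)\beta$ for some $\lambda \in [0,1]$, whence $f(x) \leq \lambda f(\alpha) + (1-\lambda) f(\beta) \leq \max\{f(\alpha),f(\beta)\} =: M_+$, a finite quantity since $[\alpha,\beta] \subset \dom f$. For the lower bound, I would pick the midpoint $m := (\alpha + \beta)/2$; when $\alpha < \beta$ this $m$ lies in the interior of $\dom f$ (since $\dom f$ is convex and contains $[\alpha,\beta]$), so $\partial f(m)$ is nonempty and any subgradient $g \in \partial f(m)$ furnishes the affine lower bound $f(x) \geq f(m) + g(x-m)$, which is itself bounded from below on $[\alpha,\beta]$ by some finite $M_-$. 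The degenerate case $\alpha = \beta$ is immediate, since then $x(s) = \alpha$ almost everywhere and $I_f(x) = \tau f(\alpha)$. Setting $M := \max\{|M_+|,|M_-|\}$, every $x \in \mathbf{S}_f$ satisfies $|f(x(s))| \leq M$ almost everywhere, and the Borel measurability of $f$ on $[\alpha,\beta]$ (convex, continuous except perhaps at the endpoints) composed with the measurable $x$ shows that $s \mapsto f(x(s))$ is measurable. Hence $|I_f(x)| \leq M\tau < \infty$, giving $x \in \dom I_f$.

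The main point requiring any care is the lower bound: without assuming lower semicontinuity one cannot simply invoke compactness of $[\alpha,\beta]$ to bound $f$ below, so the subgradient argument at an interior point is the natural workaround, and it uses only the convexity hypothesis already in the statement. Everything else — the upper bound, measurability of $f \circ x$, and finiteness of the measure of $[0,\tau]$ ensuring integrability of bounded measurable functions — is routine.
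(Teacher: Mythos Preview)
Your proof is correct and takes essentially the same approach as the paper: both use convexity to obtain the pointwise bound $f(x(s)) \leq \max\{f(\alpha),f(\beta)\}$ for $x \in \mathbf{S}_f$, and then integrate over $[0,\tau]$. The paper, however, stops there---it does not address a lower bound or measurability---since $\dom I_f$ only asks that $\int_0^\tau f(x(s))\,ds < +\infty$; your additional care with the subgradient lower bound at the midpoint and the measurability of $f\circ x$ is correct but more than the paper deems necessary.
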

	\begin{proof}
		By definition,
		\begin{align*}
		\dom I_f &= \left \{x \in L^1([0,\tau])\quad \text{such that} \quad \int_0^\tau f(x(s)) ds  < \infty  \right \}.
		\end{align*}
		The conditions $x \in \mathbf{S}_f$, $\left[\alpha, \beta \right] \subset \dom f$, and $f$ convex together ensure that $f(x(s)) \leq \max\{f(\alpha),f(\beta)\} < \infty$ holds $s$-a.e. Thus 
		$$
		\int_0^\tau f(x(s)) ds \leq \int_0^\tau \max\{f(\alpha),f(\beta)\} ds \leq \tau \max\{f(\alpha),f(\beta)\} < \infty,
		$$
		and so $x \in \dom I_f$.
	\end{proof}
	The next example problem was used to showcase the role of the Lambert $\W$ function in entropy optimization in \cite{BL2018,BL2016}, and it illustrates Lemma~\ref{lem:S}. 
	\begin{example}\label{ex:pulse}
		Let $f$ be the Boltzmann--Shannon entropy or the $\lambda \in \left]0,1 \right[$-weighted average of the entropy and the $L^2$ norm: $u \mapsto \lambda u^2 + (1-\lambda)u\log(u)$. In both cases, $\left[0,\infty \right[ =  \dom f$. Now we define the pulse:
		\begin{equation}\label{def:pulse}
		\boldx: [0,1] \rightarrow \{0,1\}: s \mapsto \begin{cases}
		1 & \text{if}\; s \in [0,1/2]\\
		0 & \text{otherwise.}
		\end{cases}
		\end{equation}
		In this case, $\boldx \in \mathbf{S}_f$ with $\alpha=0$ and $\beta=1$ respectively.	
	\end{example}
	
	We will use the definition of the core in order to develop easy-to-verify conditions for strong duality in Theorem~\ref{thm:main}. The following Lemma will allow us to choose functions $y_1,\dots,y_n$ that explicitly tie our functions $a_1,\dots,a_n$ with a general direction vector $\eta \in \RR^n$ through the operator $A$. 
	
	Before we begin, we should note why we have chosen to work with the definition of the core instead of the interior. Since $I_f$ is convex, its domain is convex. Since $A \dom I_f$ is then a convex, finite-dimensional set, we have $\inte A \dom I_f = \core A \dom I_f$ (\cite[Theorem~4.1.4]{BL}). However, we choose to speak in terms of the core, because our proofs will explicitly use direction vectors rather than open balls. This choice of definition allows us to find a value $t>0$ that \emph{depends on the direction vector} $\eta$, rather than seeking to find \emph{one} value $t>0$ so that the inclusion holds for \emph{all} direction vectors as would be required by the definition of the interior.
	
	\begin{lemma}\label{lem:linearindependence}
		Let $M\subset L^p(\left[\zeta_1,\zeta_2\right])$ with $p\geq2$ be a subspace of dimension $n$ with basis $\{a_1,\dots,a_n \}$, and let $\eta \in \RR^n$. Let $\langle \cdot, \cdot \rangle$ denote the inner product on $L^2(\left[\zeta_1,\zeta_2 \right])$. Then, for any $k \in \{1,\dots,n\}$, we may find $y_k \in M$ such that
		\begin{align*}
		\langle y_k,a_k \rangle &= \eta_k\\
		\text{and}\quad \langle y_k,a_j \rangle &= 0 \quad \text{for}\; j \neq k.
		\end{align*} 
	\end{lemma}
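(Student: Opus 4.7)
The plan is to look for $y_k$ inside $M$ itself, expressed in the given basis, and reduce the problem to solving a linear system governed by the Gram matrix of $a_1,\dots,a_n$.

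First I would observe that because $[\zeta_1,\zeta_2]$ is a bounded interval and $p\geq 2$, we have the continuous embedding $L^p([\zeta_1,\zeta_2]) \hookrightarrow L^2([\zeta_1,\zeta_2])$, so $M \subset L^2$ and the inner product $\langle \cdot,\cdot\rangle$ is well defined on $M$. Moreover, since a.e.\ equality is the same notion in $L^p$ and in $L^2$, linear independence of $\{a_1,\dots,a_n\}$ in $L^p$ transfers to linear independence in $L^2$.

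Next I would write the ansatz $y_k = \sum_{i=1}^{n} c_i a_i$, so that $y_k \in M$ automatically. Taking inner products with each $a_j$, the required conditions
\begin{equation*}
\langle y_k, a_k \rangle = \eta_k, \qquad \langle y_k, a_j \rangle = 0 \text{ for } j \neq k,
\end{equation*}
become the linear system $G c = \eta_k e_k$, where $G \in \RR^{n \times n}$ is the Gram matrix with entries $G_{ij} = \langle a_i, a_j\rangle$ and $e_k$ is the $k$-th standard basis vector of $\RR^n$.

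The step that carries the actual content is showing $G$ is invertible. This is the standard fact that a Gram matrix of linearly independent vectors in a Hilbert space is positive definite: if $c^\top G c = 0$, then $\|\sum_i c_i a_i\|_{L^2}^2 = 0$, forcing $\sum_i c_i a_i = 0$ almost everywhere, and by the linear independence of the basis $c = 0$. Hence $G$ is invertible, the system admits a unique solution $c = \eta_k G^{-1} e_k$, and the resulting $y_k = \sum_i c_i a_i$ lies in $M$ and satisfies the stated inner-product conditions. I expect no serious obstacle beyond recording the embedding into $L^2$ and the positive definiteness of $G$; the construction is otherwise immediate.
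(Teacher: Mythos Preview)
Your argument is correct. You reduce the problem to solving $Gc=\eta_k e_k$ with $G$ the Gram matrix of $a_1,\dots,a_n$ in $L^2([\zeta_1,\zeta_2])$, and you justify the invertibility of $G$ via positive definiteness, using the embedding $L^p\hookrightarrow L^2$ to make the inner product well defined and to transfer linear independence. This is clean and complete.

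The paper takes a different, more geometric route. Fixing $k$ (say $k=1$), it picks a nonzero $v\in M$ orthogonal in $L^2$ to $a_2,\dots,a_n$, argues via linear independence that $\langle a_1,v\rangle=\lambda_1\|v\|^2\neq 0$, and then sets $y_1=\dfrac{\eta_1}{\lambda_1\|v\|^2}\,v$. Your approach is essentially the dual-basis construction packaged as a single linear system: it yields all $y_k$ at once as the columns of $G^{-1}$ scaled by $\eta_k$, and it invokes only the standard Gram-matrix fact. The paper's approach, by contrast, produces each $y_k$ explicitly as a scaled element of the one-dimensional orthogonal complement of $\operatorname{span}\{a_j:j\neq k\}$ inside $M$, which makes the geometry visible but requires a small basis-change argument to see that $\langle a_k,v\rangle\neq 0$. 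Both hinge on the same embedding observation and the same linear independence, so the difference is stylistic rather than substantive.
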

	\begin{proof}
		The case where $n=1$ is obvious, so let $n \geq 2$. If $\eta_k = 0$, then let $y_k=0$ and the proof is complete. Otherwise, without loss of generality, let $k=1$. Let $v \in M \cap \left({\rm span} \{a_2,\dots,a_n\}\right)^\perp \setminus \{0\}$. Then $\{v,a_2,\dots,a_n \}$ is a basis for $M$, and so we may write
		\begin{equation}\label{asandvs}
		a_1 = \lambda_1 v + \lambda_2 a_2 + \dots + \lambda_n a_n
		\end{equation}
		for some $\lambda \in \RR^n$. By the linear independence of $\{a_1,\dots,a_n\}$, \eqref{asandvs} implies that 
		\begin{equation}\label{itsnotzero}
		\lambda_1 \neq 0. 
		\end{equation}
		Thus we have that
		\begin{align}
		\langle a_1,v \rangle &= \langle \lambda_1 v + \lambda_2 a_2 + \dots + \lambda_n a_n , v \rangle \nonumber\\
		&=\langle \lambda_1 v,v\rangle  + \langle \lambda_2 a_2 , v \rangle + \dots + \langle \lambda_n a_n ,v \rangle \nonumber \\
		&=\langle \lambda_1 v,v\rangle + 0 + \dots + 0 \nonumber\\
		&= \lambda_1 \|v\|_{\langle \cdot,\cdot \rangle}^2.
		\end{align}
		Since we have $\lambda_1 \neq 0$ and $v \neq 0$, we have $\lambda_1 \|v\|_{\langle, \cdot, \cdot, \rangle}^2 \neq 0$. Using the fact that $v \in M \subset L^p(\left[\zeta_1,\zeta_2\right]) \subset L^2(\left[\zeta_1,\zeta_2\right])$, we have that $\|v\|_{\langle \cdot,\cdot \rangle} = \|v\|_{L^2(\left[\zeta_1,\zeta_2\right])} < \infty$. Therefore, let  
		\begin{equation}
		y_1 := \frac{\eta_1}{\lambda_1 \|v\|_{\langle \cdot,\cdot \rangle}^2}v. 
		\end{equation}
		Then we have that
		\begin{align*}
		\langle a_1,y_1 \rangle &= \frac{\eta_1}{\lambda_1 \|v\|_{\langle \cdot,\cdot \rangle}^2} \langle a_1,v \rangle = \eta_1\\
		\text{and}\;\; (j\neq 1) \implies \langle y_1,a_j \rangle &= \frac{\eta_1}{\lambda_1 \|v\|_{\langle \cdot,\cdot \rangle}^2} \langle v,a_j \rangle = 0.
		\end{align*}
		This is the desired result.
	\end{proof}
	
	The following theorem provides a condition for strong duality that is, in practice, easy to check. 
	
	\begin{theorem}\label{thm:main}
		Let $[\alpha,\beta] \subset \dom f$. Let $\mathbf{S}_f$ be defined as in Lemma~\ref{lem:S}, and let $x \in \mathbf{S}$ such that $Ax=b$. Moreover, let $0 \leq \zeta_1 < \zeta_2 \leq \tau$ and $x: [\zeta_1 , \zeta_2] \rightarrow [\varepsilon_1 , \varepsilon_2] \subset \; ]\alpha,\beta [$. Let $A$ be as in \eqref{d:A} with $a_1,\dots,a_n$ linearly independent in $L^\infty([\zeta_1,\zeta_2])$ and bounded on $[\zeta_1,\zeta_2]$. Then $b \in \core A (\dom I_f)$.
	\end{theorem}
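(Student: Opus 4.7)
The plan is to show directly from Definition~\ref{def:core} that for every direction $\eta \in \RR^n$, we can find $t>0$ (depending on $\eta$) such that $b + t\eta \in A(\dom I_f)$, by producing a perturbation of $x$ that stays in $\mathbf{S}_f$ and moves $b$ in the direction of $\eta$ under $A$.

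First, I would fix an arbitrary $\eta \in \RR^n$ and apply Lemma~\ref{lem:linearindependence} on the interval $[\zeta_1,\zeta_2]$ with $M := \mathrm{span}\{a_1,\dots,a_n\}\subset L^\infty([\zeta_1,\zeta_2]) \subset L^2([\zeta_1,\zeta_2])$ (using the hypothesis that the $a_k$ are linearly independent there, and $p=\infty\geq 2$). This produces functions $y_1,\dots,y_n \in M$ satisfying $\int_{\zeta_1}^{\zeta_2} y_k a_j\,ds = \eta_k\delta_{kj}$. Since each $y_k$ is a scalar multiple of an element of $M$ and every $a_j$ is bounded on $[\zeta_1,\zeta_2]$, each $y_k$ is bounded on $[\zeta_1,\zeta_2]$.

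Next, I extend each $y_k$ by zero outside $[\zeta_1,\zeta_2]$ to obtain a function on $[0,\tau]$, set $y := \sum_{k=1}^n y_k$, and note that $y \in L^\infty([0,\tau])$ with some finite bound $C := \|y\|_\infty$. A direct computation using the orthogonality relations yields $Ay = \eta$ componentwise in $\RR^n$. Now I consider the candidate $x_t := x + t y$ for small $t>0$. Outside $[\zeta_1,\zeta_2]$, $y\equiv 0$, so $x_t = x \in [\alpha,\beta]$ a.e. there. On $[\zeta_1,\zeta_2]$, the hypothesis $x([\zeta_1,\zeta_2])\subset[\varepsilon_1,\varepsilon_2]\subset\,]\alpha,\beta[$ together with $|ty(s)| \leq tC$ shows that for any
\[
0 < t < t_\eta := \frac{\min\{\varepsilon_1-\alpha,\; \beta - \varepsilon_2\}}{C+1},
\]
we have $x_t(s) \in [\alpha,\beta]$ for a.e.\ $s\in[\zeta_1,\zeta_2]$ (taking $t_\eta$ arbitrary if $C = 0$).

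Finally, this places $x_t \in \mathbf{S}_f$, so by Lemma~\ref{lem:S}, $x_t \in \dom I_f$, and by linearity $A x_t = A x + t A y = b + t\eta \in A(\dom I_f)$. Since $\eta$ was arbitrary and $t_\eta > 0$ was chosen to depend on $\eta$, Definition~\ref{def:core} gives $b \in \core A(\dom I_f)$. The only genuine subtlety is the one already flagged in the paragraph preceding Lemma~\ref{lem:linearindependence}: it is essential that $t$ may depend on $\eta$, because the bound $C = \|y\|_\infty$ does; this is precisely why we use the core rather than the interior definition, even though the two coincide here.
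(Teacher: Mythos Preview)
Your proposal is correct and follows essentially the same approach as the paper's proof: apply Lemma~\ref{lem:linearindependence} on $[\zeta_1,\zeta_2]$ to obtain the functions $y_k$, extend their sum by zero, choose $t>0$ small enough (depending on $\eta$) so that $x+ty$ remains in $\mathbf{S}_f$, and verify $A(x+ty)=b+t\eta$. The only cosmetic differences are that the paper absorbs $t$ into the definition of $y$ and bounds each $|y_k|$ separately by some $\Delta$ (using $n\Delta$ in place of your $C$), whereas you bound the sum directly and use $C+1$ to sidestep the degenerate case.
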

	\begin{proof}
		Let $\eta \in \mathbb{R}^n$ be a direction. Recalling Definition~\ref{def:core}, it suffices to find $t>0$ such that $b+t\eta \in A(\mathbf{S}_f) \subset A(\dom I_f)$.
		
		For $k=1,\dots,n$ let $y_k \in L^\infty([\zeta_1,\zeta_2])$ be functions bounded on $[\zeta_1,\zeta_2]$ satisfying:
		\begin{align}
		\langle y_k, a_k \rangle_{L^2([\zeta_1,\zeta_2])} &= \int_{\zeta_1}^{\zeta_2} y_k(s)a_k(s)ds =   \eta_k \label{eqn:etaequality}\\
		\text{and}\quad \langle y_k, a_j \rangle_{L^2([\zeta_1,\zeta_2])} &= 0 \quad \text{for} \; j \neq k. \label{eqn:mutualorthogonality}
		\end{align}
		We may do this by the linear independence of $a_1,\dots, a_n$ on $\left[\zeta_1,\zeta_2\right]$ (simply apply Lemma~\ref{lem:linearindependence} with $p=\infty$ and $M={\rm span}\{a_1,\dots,a_n\} \cap L^\infty (\left[\zeta_1,\zeta_2\right])$). Since $y_1,\dots,y_n$ are bounded on $[\zeta_1,\zeta_2]$, there exists $\Delta > 0$ such that: 
		\begin{equation}\label{eqn:Delta}
		\underset{s \in [\zeta_1,\zeta_2], k \in \{1,\dots,n \}}{\max}\{|y_k(s)| \} < \Delta.
		\end{equation}
		Choose $t>0$ such that 
		\begin{equation}\label{eqn:tDelta}
		t(n\Delta) < \min\{\varepsilon_1-\alpha,(\beta-\varepsilon_2) \} .
		\end{equation} 
		Finally, let
		\begin{align*}
		y: [0,\tau] &\rightarrow \;\; \left]-t(n\Delta),t(n\Delta) \right[\\
		\text{by}\quad y:s &\mapsto \begin{cases} t\sum_{j=1}^n y_j(s) & \text{if}\;\; s \in [\zeta_1,\zeta_2]\\
		0 & \text{otherwise}\end{cases}.
		\end{align*}
		We will show that:
		\begin{enumerate}
			\item \label{P1} $x+y \in \mathbf{S}_f$ \quad and
			\item \label{P2} $A(x+y) = b+t\eta$.	
		\end{enumerate}
		Conditions \ref{P1} and \ref{P2} combined demonstrate that 
		$$
		b+t\eta \in A (\mathbf{S}_f) \subset A (\dom I_f),\quad \text{and thus} \quad x \in \core (A \dom I_f).
		$$
		
		\noindent \ref{P1}: To see that $x+y \in \mathbf{S}_f$, we consider two cases.\\ 
		
		\noindent\textbf{Case 1:} If $s \notin [\zeta_1,\zeta_2]$ then $(x+y)(s)=x(s) \in [\alpha,\beta]$.\\
		
		\noindent\textbf{Case 2:} If $s \in [\zeta_1,\zeta_2]$, then we have from the assumptions that
		\begin{equation}\label{eqn:xs}
		x(s) \in \;\; \left[\varepsilon_1,\varepsilon_2 \right].
		\end{equation}
		Additionally, from \eqref{eqn:Delta} and \eqref{eqn:tDelta} we have that
		\begin{align}\label{eqn:ys}
		\resizebox{\textwidth}{!}{%
			$|y(s)|\leq t\sum_{k=1}^n |y_k(s)| \leq tn\left(\underset{s \in [\zeta_1,\zeta_2], k \in \{1,\dots,n \}}{\max}\{|y_k(s)| \} \right) \leq t(n\Delta) < \min \{\varepsilon_1-\alpha, (\beta-\varepsilon_2)  \}.$%
		}  
		\end{align}
		Now \eqref{eqn:xs} and \eqref{eqn:ys} combined show that $(x+y)(s) \in \left]\alpha,\beta \right[$. Combining the above two cases, we have that
		$$
		(\forall s \in [0,\tau]) \;\; (x+y)(s) \in [\alpha,\beta],\quad \text{and so}\quad x+y \in \mathbf{S}_f.
		$$
		\noindent \ref{P2}: To see that $A(x+y)=b+t\eta$, notice that by definition
		\begin{align}\label{eqn:Axy}
		A(x+y) = \left(\int_0^\tau a_1(s) (x(s)+y(s))ds, \dots , \int_0^\tau a_n(s) (x(s)+y(s))ds \right).
		\end{align}
		Thus the $k$th term of $A(x+y)$ is
		\begin{subequations}\label{eqn:bketak}
		\begin{align}
		\int_0^\tau a_k(s)(x(s)+y(s)) ds &= \int_0^{\tau} a_k(s)x(s)ds + \int_{\zeta_1}^{\zeta_2} a_k(s)y(s)ds \label{eqn:bketak1} \\
		&= b_k + \int_{\zeta_1}^{\zeta_2} a_k(s)y(s) ds \label{eqn:bketak2} \\
		&= b_k + \int_{\zeta_1}^{\zeta_2} a_k(s)\left( t\sum_{j=1}^n y_j(s)\right) ds \label{eqn:bketak3} \\
		&= b_k + \sum_{j=1}^n t\langle a_k,y_j \rangle_{L^2([\zeta_1,\zeta_2])} \label{eqn:bketak4} \\
		&= b_k + t\langle a_k,y_k \rangle_{L^2([\zeta_1,\zeta_2])} \label{eqn:bketak5} \\
		&= b_k + t \eta_k. \label{eqn:bketak6}
		\end{align}
		\end{subequations}
		Here \eqref{eqn:bketak1} uses the fact that $s \notin [\zeta_1,\zeta_2] \implies y(s)=0$, \eqref{eqn:bketak2} uses the definition of $Ax=b$ to replace the left integral by $b_k$, \eqref{eqn:bketak3} uses only the definition of $y$, \eqref{eqn:bketak4} is the definition of the inner product on $L^2([\zeta_1,\zeta_2])$, \eqref{eqn:bketak5} uses \eqref{eqn:mutualorthogonality}, and \eqref{eqn:bketak6} uses \eqref{eqn:etaequality}.
		
		Finally, \eqref{eqn:Axy} and \eqref{eqn:bketak} combined yield 
		$$
		A(x+y)=(b_1+t\eta_1,\dots,b_n+t\eta_n)=b+t\eta.
		$$
		This completes the result.
	\end{proof}	
	
	The following example illustrate the application of Theorem~\ref{thm:main}.

	\begin{example}[The pulse]\label{ex:pulse2}
		Consider the case of Example~\ref{ex:pulse} where $\boldx$ is the pulse. In terms of Theorem \ref{thm:main}, $\boldx \in \mathbf{S}_f$ with $\alpha=0,\beta = \infty$, $0 \leq \zeta_1 < \zeta_2 \leq \frac{1}{2}$, and $0<\varepsilon_1 = 1 = \varepsilon_2 < \infty$.
	\end{example}

	\section{Approach using the quasi-relative interior}\label{s:existingtheory}
	
	Now we will compare the approach we used in Section~\ref{s:newresults} to obtain Theorem~\ref{thm:main} against the process used in the existing theory. We have the following proposition from Borwein and Lewis' first paper on quasi-relative interiors.
	\begin{proposition}[{\cite[Proposition~2.10]{borwein1988partially}}]\label{prop:subsetqri}
		Let $X,Y$ be locally convex with $C \subset X$ convex and $A:X \rightarrow \RR^n$ continuous and linear. If $\qri C \neq \emptyset$, then $A(\qri C) = \reli(AC)$.
	\end{proposition}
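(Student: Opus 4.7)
My plan is to prove the two inclusions separately, leveraging the fact that $AC \subset \mathbb{R}^n$ is finite dimensional so $\reli(AC) = \qri(AC)$.

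For the inclusion $A(\qri C) \subset \reli(AC)$, I would fix $x \in \qri C$ and aim to show that $\cl\cone(AC - Ax)$ is a subspace. By linearity, $A(\cone(C-x)) = \cone(AC - Ax)$; by continuity of $A$, we get $A(\cl\cone(C-x)) \subset \cl\cone(AC - Ax)$. The reverse containment follows because $V := \cl\cone(C-x)$ is a subspace by hypothesis, $A$ is linear into $\mathbb{R}^n$, so $AV$ is a linear subspace of $\mathbb{R}^n$ and hence closed, and $AV \supset \cone(AC-Ax)$. Therefore $\cl\cone(AC-Ax) = AV$ is a subspace, which places $Ax$ in $\qri(AC) = \reli(AC)$.

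For the inclusion $\reli(AC) \subset A(\qri C)$, I would first extract an auxiliary lemma saying that if $x_0 \in \qri C$ and $x_1 \in C$, then $x_\lambda := \lambda x_0 + (1-\lambda) x_1 \in \qri C$ for every $\lambda \in (0,1]$. To establish this, I would let $V := \cl\cone(C - x_0)$, observe that $x_1 - x_0 \in C - x_0 \subset V$ and that $V$ is a subspace, and then write, for an arbitrary $c \in C$,
\begin{equation*}
c - x_\lambda \;=\; \lambda(c - x_0) + (1-\lambda)(c - x_1) \;=\; (c - x_0) + (1-\lambda)(x_0 - x_1),
\end{equation*}
which lies in $V$. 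Hence $\cl\cone(C - x_\lambda) \subset V$. The reverse inclusion is immediate from $C - x_\lambda \supset \lambda(C - x_0)$, so $\cl\cone(C - x_\lambda) = V$, a subspace, and $x_\lambda \in \qri C$.

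With this segment lemma in hand, I would use the hypothesis $\qri C \neq \emptyset$ to pick $x_0 \in \qri C$, set $y_0 := Ax_0$ (which lies in $\reli(AC)$ by the first inclusion), and for any target $y \in \reli(AC)$ invoke the standard line-extension property of the relative interior in $\mathbb{R}^n$ to find $z \in AC$ and $\mu \in (0,1)$ with $y = (1-\mu)z + \mu y_0$. Picking any $x_1 \in C$ with $Ax_1 = z$ and setting $x := (1-\mu)x_1 + \mu x_0$, linearity gives $Ax = y$, and the segment lemma (applied with $\lambda = \mu$) gives $x \in \qri C$.

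The main obstacle is the segment lemma for the quasi-relative interior, since everything else is either linearity bookkeeping or a standard finite-dimensional fact about $\reli$; getting the identity $\cl\cone(C - x_\lambda) = \cl\cone(C - x_0)$ to fall out cleanly is what makes the argument work, and it depends critically on $\cl\cone(C-x_0)$ being closed under negation.
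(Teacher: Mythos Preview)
Your argument is correct. The first inclusion is clean: continuity of $A$ gives $AV \subset \cl\cone(AC-Ax)$, and closedness of the finite-dimensional subspace $AV$ gives the reverse, so $\cl\cone(AC-Ax)=AV$ is a subspace. The segment lemma is also fine; the key identity $C-x_\lambda \supset \lambda(C-x_0)$ follows from convexity exactly as you use it (since $\lambda c + (1-\lambda)x_1 \in C$), and together with $C-x_\lambda \subset V$ you get $\cl\cone(C-x_\lambda)=V$. The line-extension step in $\reli(AC)$ then finishes the reverse inclusion.

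As to comparison: the paper does not prove this proposition at all. It is quoted as \cite[Proposition~2.10]{borwein1988partially} and used as a black box in Section~\ref{s:existingtheory}, so there is no in-paper proof to compare your approach against. For what it is worth, your route---show $A(\qri C)\subset \qri(AC)$ directly, then push any relative-interior point back along a segment toward a fixed $\qri$ witness---is the standard one in the Borwein--Lewis development, and your segment lemma is precisely their accessibility lemma for the quasi-relative interior.
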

	As before, we want to verify the condition that $b \in {\; \rm core}(A {\; \rm dom}F)$, so that we can apply Theorem~\ref{thm:borweinzhuduality} to guarantee strong duality. By Proposition~\ref{prop:subsetqri}, it suffices to find a feasible $x \in \qri (\dom I_f)$, because if such a feasible $x$ exists,  Proposition~\ref{prop:subsetqri} tells us that 
	\begin{equation*}
	b= Ax \in A \qri (\dom I_f) \subset \reli (A \dom I_f) \subset \core (A \dom I_f).
	\end{equation*}
	We denote by $X_+$ the positive cone in $X$. In the case where $X=L^p(\RR,\mu)$, where $(\RR,\mu)$ is the real numbers together with the usual Lebesgue measure, we simply have
	\begin{equation}\label{def:X+}
	X_{+}=X_{\geq 0}\;:=\; \{x(s)\;|\; x(s) \geq 0, \;s-a.e. \}
	\end{equation}
	It may be seen from Lemma~\ref{lem:S} and Table~\ref{tab:entropies} that $X_+$ is prototypical of the domain for many entropy functionals of interest. The following example from \cite{borwein1988partially} provides an important characterization of the quasi-relative-interiors of the domains of many entropy functionals.
	
	\begin{example}[{\cite[Examples~3.11]{borwein1988partially}}]
		Let $X=L^p(T,\mu)$ with $(T,\mu)$ a $\sigma$-finite measure space and $1 \leq p < \infty$. Then we have that
		\begin{equation}\label{Q1}\tag{Q1}
		\qri (X_+) = \left \{ x\;|\;x(s)>0,\; s-a.e. \right \}.
		\end{equation}
	\end{example}
	Strong duality may be verified by finding a feasible point $x$ (feasible in the sense that $Ax=b$) that is also in $\qri (X_+)$. However, consider the example when $f$ is the Boltzmann--Shannon entropy and $b=A\boldx$ where $\boldx$ is the pulse \eqref{def:pulse}. Here we possess a priori knowledge of a point $\boldx$ that is \emph{feasible} in the sense that $A\boldx = b$. However, \eqref{Q1} tells us that the feasible point $\boldx$ is not in $\qri (X_+)$. For many problems when strong duality holds, it may be difficult to find a feasible $x \in \qri (X_+)$ by using the characterization in \eqref{Q1}.
	
	Borwein and Lewis provided a result, \cite[Theorem~2.9]{BL2}, which we recall as Theorem~\ref{thm:BLmaintheorem}, that allows us to easily check strong duality for this particular example. Before we introduce the theorem, we first need a definition from \cite{borwein1988partially}.
	\begin{definition}[Pseudo-Haar ({\cite[Definition~7.8]{borwein1992partially}})]
		Suppose that $a_i:\left[\alpha,\beta\right] \rightarrow \RR, i=1,\dots,m$ are continuous and linearly independent on every non-null subset of $\left[\alpha,\beta\right]$. Then we say the $a_i$s are pseudo-Haar on $\left[\alpha,\beta\right]$.
	\end{definition}

	For example, the monomials $a_i: s \mapsto s^k,\;i=1,\dots,n$ are pseudo-Haar on $\left[\alpha,\beta\right]$ for any $\alpha<\beta$. This choice of $a_i$ was used in, for example, \cite{BL2018,BL2016}. Now we recall the strong duality conditions of Borwein and Lewis.
	
	\begin{theorem}[{\cite[Theorem~2.9]{BL2}}]\label{thm:BLmaintheorem}
		Suppose $(T,\mu)$ is a finite measure space and that
		\begin{varsubequations}{Q2}\label{Q2}
			\begin{align}
			&0 \leq \rho \in L^p(T)\; \text{is nonzero}\; (1 \leq p \leq \infty).\label{Q2a}\tag{Q2a} \\
			\text{and} \; &a_i \in L^q(T), i=1,\dots,n\; \text{are pseudo-Haar}\label{Q2b}\tag{Q2b}
			\end{align}
		\end{varsubequations}
		Then there exists a $y \in L^{\infty}(T)$ and $\varepsilon > 0$ with $y(s) \geq \varepsilon$ almost everywhere and $\langle \rho, a_i \rangle = \langle y, a_i \rangle$, each $i$.
	\end{theorem}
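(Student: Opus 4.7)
The plan is to reformulate the conclusion as a core-membership statement and then follow the same template used in the proof of Theorem~\ref{thm:main}. Let $\Phi \colon L^\infty(T) \to \RR^n$ denote the moment map $w \mapsto (\langle w, a_i\rangle)_{i=1}^n$, and let $K := \Phi(\{w \in L^\infty(T) : w \geq 0\})$ be the resulting moment cone. It suffices to show $\Phi(\rho) \in \core K$: since $\mu(T) < \infty$ makes $\Phi(1) \in \RR^n$ well-defined, core-membership lets me pick $\varepsilon > 0$ with $\Phi(\rho) - \varepsilon \Phi(1) \in K$; then choosing $z \in L^\infty(T)$ with $z \geq 0$ and $\Phi(z) = \Phi(\rho) - \varepsilon \Phi(1)$, the function $y := z + \varepsilon$ satisfies $y \geq \varepsilon$ a.e.\ and $\langle y, a_i\rangle = \langle \rho, a_i\rangle$ for each $i$.

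To produce $\Phi(\rho) \in \core K$, fix a direction $\eta \in \RR^n$; I need $t > 0$ and $w \in L^\infty(T)$ with $w \geq 0$ and $\Phi(w) = \Phi(\rho) + t\eta$. Since $\rho \geq 0$ is nonzero in $L^p(T)$, I can pick $\delta > 0$ and a measurable $T_0 \subset T$ of positive measure on which $\rho \geq \delta$. The pseudo-Haar hypothesis guarantees that $a_1, \dots, a_n$ remain linearly independent on $T_0$, so Lemma~\ref{lem:linearindependence} (applied to $M = \mathrm{span}\{a_1,\dots,a_n\} \cap L^\infty(T_0)$) produces bounded $y_1, \dots, y_n$ on $T_0$ with $\int_{T_0} y_k a_j \, d\mu = \delta_{kj}$; extending each by zero outside $T_0$, set $M_0 := \max_k \|y_k\|_\infty$.

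The only real wrinkle compared with Theorem~\ref{thm:main} is that $\rho$ need not lie in $L^\infty(T)$, so $\rho + t h$ is not an allowed witness $w$. I truncate: $\rho_C := \min(\rho, C) \in L^\infty(T)$ still satisfies $\rho_C \geq \delta$ on $T_0$ whenever $C \geq \delta$, and dominated convergence yields $\xi_C := \Phi(\rho) - \Phi(\rho_C) \to 0$ as $C \to \infty$. Setting
$$
h := \sum_{k=1}^n (\xi_{C,k} + t\eta_k)\, y_k,
$$
one has $\Phi(h) = \xi_C + t\eta$ and $\|h\|_\infty \leq M_0(\|\xi_C\| + t\|\eta\|)$. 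Choosing $t > 0$ with $t M_0 \|\eta\| < \delta/2$ and then $C$ large enough that $M_0 \|\xi_C\| < \delta/2$, the function $w := \rho_C + h$ lies in $L^\infty(T)$ and is nonnegative (it equals $\rho_C \geq 0$ off $T_0$ and is at least $\delta - \|h\|_\infty \geq 0$ on $T_0$), and $\Phi(w) = \Phi(\rho_C) + \Phi(h) = \Phi(\rho) + t\eta$, as required.

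The main obstacle I anticipate is precisely the integrability mismatch between $\rho \in L^p(T)$ and the required $y \in L^\infty(T)$, which rules out the direct one-step perturbation of $\rho$ that was available in Theorem~\ref{thm:main}; the truncation device above is what repairs it. Everything else reduces to the familiar \emph{localise to a subset where $\rho$ is bounded below and invoke Lemma~\ref{lem:linearindependence}} strategy.
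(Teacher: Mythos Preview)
Your proof is correct and shares the essential ingredients with the paper's argument (which the paper defers to the proof of Theorem~\ref{thm:2sided}): both restrict to a positive-measure set $T_0$ on which $\rho$ is bounded below, both exploit the pseudo-Haar hypothesis to make the restricted moment map surjective, and both truncate $\rho$ to an $L^\infty$ function and then add a small correction supported on $T_0$. The packaging differs. The paper constructs $y$ directly: it shows that the image under $\Phi$ of a small $L^\infty$-ball over $T_1$ is a neighbourhood of $0$ in $\RR^n$, truncates $\rho$ until the moment defect lands in that neighbourhood, and subtracts a correction $v$. You instead first abstract to the statement $\Phi(\rho)\in\core K$, prove that in the style of Theorem~\ref{thm:main}, and only then specialise the direction to $-\Phi(1)$ to recover the strictly positive witness $y=z+\varepsilon$. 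Your reformulation has the virtue of making the link to Section~\ref{s:newresults} explicit; the paper's route is one step shorter because it never separates the ``core'' assertion from the final construction of $y$.

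One minor caveat: your appeal to Lemma~\ref{lem:linearindependence} places each $y_k$ in ${\rm span}\{a_1,\dots,a_n\}$, so the claim that the $y_k$ are bounded implicitly uses $a_i|_{T_0}\in L^\infty(T_0)$; this is true under the continuity built into the pseudo-Haar definition but does not follow from $a_i\in L^q(T)$ alone. The paper's surjectivity argument (if $C1\neq\RR^n$, an annihilating $\lambda$ would force $\sum\lambda_i a_i=0$ a.e.\ on $T_1$, contradicting linear independence there) avoids this by producing the preimage directly in $L^\infty(T_1)$ rather than in the span of the $a_i$.
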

	\begin{proof}
		See \cite[Theorem~2.9]{BL2} or our proof below for Theorem~\ref{thm:2sided}, which is based thereon.
	\end{proof}

	Clearly the choice of the pulse $\boldx \in L^1(\left[0,1\right])$ satisfies \eqref{Q2a}; moreover, the condition \eqref{Q2b} holds for, say, monomials $a_i: s \mapsto s^i$ where $a_1,\dots,a_n \in L^\infty(\left[0,1\right])$. Now the condition $\langle \boldx,a_i \rangle = \langle y,a_i \rangle$ is the same as $A\boldx=Ay=b$, and the condition $y(s)\geq \varepsilon,\; s-a.e.$ assures that $y \in \reli(X_+)$ by \eqref{Q1}. Thus, the existence of this $y$ guarantees strong duality by Proposition~\ref{prop:subsetqri}. Thus the conditions \eqref{Q2} are sufficient to guarantee strong duality for the choice $\rho=\boldx$, and they are easy to check. 
	
	However, \eqref{Q2} does not tell us what to do when $X_+$ \emph{properly} contains the domain of our functional $f$. For example, when $f$ is the Fermi Dirac entropy, we have $\dom f = \left[0,1\right]$. For such a case, we can also define the sets $X_{\geq \alpha},X_{\leq \beta}$ by
	\begin{align*}
	X_{\geq \alpha}\;:=&\; \{x\;|\; x(s) \geq \alpha, \;s-a.e. \},\\
	\text{and}\quad X_{\leq \beta}\;:=&\; \{x\;|\; x(s) \leq \beta, \;s-a.e. \}.
	\end{align*}
	It is clear that $X_{\leq \beta} = -X_{+}+\beta$ where $\beta$ denotes the constant function $\beta:s \mapsto \beta$. Naturally, we have that 
	\begin{subequations}\label{suitabletranslation}
		\begin{align}
			\qri X_{\leq \beta} &= -\qri X_{+}+\beta = \left \{x \;|\; x(s) < \beta,\;s-a.e. \right \}	\\
			\text{and\;analogously}\quad\qri X_{\geq \alpha} &= -\qri X_{+}+\alpha = \left \{x \;|\; x(s) > \alpha,\;s-a.e. \right \}.
		\end{align}
	\end{subequations}
	Clearly we then have that
	\begin{align}\label{qricap}
	\qri X_{\geq \beta} \cap \qri X_{\leq \alpha} = \left \{x\;|\; \alpha < x(s) < \beta,\;s-a.e.\right \}.
	\end{align}
	This leads us to consider Theorem~\ref{thm:qri}, for which we employ another result of Borwein and Lewis.
	\begin{theorem}[{\cite[Theorem~2.13]{borwein1988partially}}]\label{thm:qri_intersections}
		Let $X$ be locally convex and $C,D \subset X$ be convex. If $C \cap \inte D \neq \emptyset$, then $(\qri C) \cap (\inte D) = \qri (C \cap D)$. 
	\end{theorem}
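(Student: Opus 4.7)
My plan is to prove the two inclusions separately, with both resting on the key observation that whenever $x \in C \cap \inte D$, the cone identity $\cl \cone(C - x) = \cl \cone((C \cap D) - x)$ holds. The containment $\supset$ is immediate from $(C \cap D) - x \subset C - x$; for $\subset$, given $c \in C$, pick a neighborhood $U$ of $0$ with $x + U \subset D$. Then for all sufficiently small $t > 0$, the point $(1 - t)x + tc$ lies in $x + U \subset D$ and, by convexity of $C$, also in $C$. Hence $t(c - x) \in (C \cap D) - x$, so $c - x \in \cone((C \cap D) - x)$; closing up gives the identity.

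This identity instantly delivers the inclusion $(\qri C) \cap (\inte D) \subset \qri(C \cap D)$: for such an $x$, the identity produces a subspace on both sides, so $x \in \qri(C \cap D)$. For the reverse inclusion, fix $x \in \qri(C \cap D)$ and, using the hypothesis, select $x_0 \in C \cap \inte D$. The only nontrivial step is showing $x \in \inte D$, after which the same cone identity (with $x \in C \cap \inte D$ now in hand) yields $x \in \qri C$.

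To show $x \in \inte D$, I would argue by contradiction. If $x \notin \inte D$, then since $\inte D$ is open, convex, and disjoint from $\{x\}$, the Hahn--Banach separation theorem in the locally convex space $X$ supplies a nonzero continuous linear functional $\phi$; using $D \subset \cl \inte D$ (valid because $\inte D \neq \emptyset$), we may arrange $\phi(d) \leq \phi(x)$ for every $d \in D$. Then $\phi \leq 0$ on $(C \cap D) - x$ and hence, by continuity, on its closed conic hull $\cl \cone((C \cap D) - x)$, which is a subspace by hypothesis; a one-signed linear functional on a subspace vanishes there. In particular $\phi(x_0 - x) = 0$. Choosing a balanced neighborhood $U$ of $0$ with $x_0 + U \subset D$ yields $\phi(x_0 + u) \leq \phi(x) = \phi(x_0)$ for $u \in U$, so $\phi \leq 0$ on $U$; balancedness and absorbency of $U$ then force $\phi \equiv 0$, contradicting $\phi \neq 0$. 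The hard part is precisely this separation step, combining the locally convex Hahn--Banach theorem with the subspace structure of $\cl \cone((C \cap D) - x)$ and the interior point $x_0$; once it is in hand, the remainder is routine cone bookkeeping.
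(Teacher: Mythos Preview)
The paper does not supply its own proof of this statement; it is quoted verbatim as \cite[Theorem~2.13]{borwein1988partially} and used as a black box in the argument for Theorem~\ref{thm:qri}. So there is nothing in the paper against which to compare your argument line by line.

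That said, your proof is correct and is essentially the standard one. The cone identity $\cl\cone(C-x)=\cl\cone((C\cap D)-x)$ for $x\in C\cap\inte D$ is established cleanly via the line-segment trick $(1-t)x+tc=x+t(c-x)$ and absorbency of neighborhoods of $0$; this immediately gives the easy inclusion. For the reverse inclusion, the only substantive step is indeed that $x\in\qri(C\cap D)$ forces $x\in\inte D$, and your Hahn--Banach argument is the right one: separate $x$ from the open convex set $\inte D$, extend the inequality to $D$ via $D\subset\cl\inte D$, observe that $\phi$ is one-signed on the subspace $\cl\cone((C\cap D)-x)$ and therefore vanishes there, then use the interior point $x_0$ and a balanced absorbing neighborhood to kill $\phi$ entirely. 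Once $x\in\inte D$ is in hand, the cone identity closes the loop. No gaps.
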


	\begin{theorem}\label{thm:qri}
		Let 
		\begin{equation}\label{BLcriterion}
		Ay=b\quad \text{ and}\quad \alpha < y(s) < \beta, \;\;s-a.e.,\quad\text{and}\quad\left[\alpha,\beta\right] \subset \dom f.
		\end{equation}
		Then $b \in \core (A\dom I_f)$.
	\end{theorem}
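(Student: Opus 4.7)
The plan is to follow the program sketched at the start of Section~\ref{s:existingtheory}: exhibit $y$ as an element of $\qri(\dom I_f)$ and then invoke Proposition~\ref{prop:subsetqri} to conclude that $b = Ay$ lands in $\core(A\dom I_f)$. To get started, the hypothesis $[\alpha,\beta] \subset \dom f$ together with Lemma~\ref{lem:S} gives $\mathbf{S}_f \subset \dom I_f$, and the strict condition $\alpha < y(s) < \beta$ s-a.e.\ places $y$ in $\mathbf{S}_f$ and, by the characterization \eqref{qricap}, also in $\qri X_{\geq \alpha} \cap \qri X_{\leq \beta}$.

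Next, I would argue that $y$ in fact lies in $\qri(\dom I_f)$. Writing $\mathbf{S}_f = X_{\geq \alpha} \cap X_{\leq \beta}$ within the ambient $L^p$, an application of Theorem~\ref{thm:qri_intersections} (or a direct unpacking of \eqref{suitabletranslation} combined with \eqref{qricap}) yields $y \in \qri \mathbf{S}_f$. Because $y$ takes values strictly inside $[\alpha,\beta]$, every sufficiently small bounded perturbation of $y$ supported on a set where $y$ is bounded away from $\{\alpha,\beta\}$ again lies in $\mathbf{S}_f$; density of such truncated bounded functions in $L^p$ then shows $\cl\cone(\mathbf{S}_f - y) = L^p$. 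The inclusion $\mathbf{S}_f \subset \dom I_f$ forces $\cl\cone(\dom I_f - y) \supset \cl\cone(\mathbf{S}_f - y) = L^p$, so the former is the whole space and in particular a subspace, giving $y \in \qri(\dom I_f)$.

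Applying Proposition~\ref{prop:subsetqri} with $C := \dom I_f$ then yields $b = Ay \in A\,\qri(\dom I_f) = \reli(A\dom I_f)$, and the inclusion chain $\reli(A\dom I_f) \subset \core(A\dom I_f)$ recorded at the start of Section~\ref{s:existingtheory} closes the argument.

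The main obstacle is the second step: the quasi-relative interior is not monotonic under set inclusion, so merely knowing $y \in \qri X_{\geq \alpha} \cap \qri X_{\leq \beta}$, or even $y \in \qri \mathbf{S}_f$, does not immediately produce $y \in \qri(\dom I_f)$. The saving grace is that the strict separation $\alpha < y(s) < \beta$ forces $\cl\cone(\mathbf{S}_f - y)$ to exhaust the entire ambient $L^p$, after which the passage to the larger set $\dom I_f$ is automatic, since any set whose closed conical hull at $y$ contains an already-full subspace has itself a subspace as its closed conical hull at $y$.
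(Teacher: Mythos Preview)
Your argument is correct, but it follows a genuinely different route from the paper's. The paper applies $A$ at the outset and then carries out the intersection manipulations \emph{in $\RR^n$}: it passes from $A(\qri X_{\geq\alpha}\cap\qri X_{\leq\beta})$ through $\reli AX_{\geq\alpha}\cap\reli AX_{\leq\beta}$, invokes Theorem~\ref{thm:qri_intersections} in the finite-dimensional image (where the interiors are nonempty), and only at the end uses $X_{\geq\alpha}\cap X_{\leq\beta}\subset\dom I_f$ together with equality of affine hulls to land in $\core(A\dom I_f)$. You instead stay in the infinite-dimensional space and prove the stronger statement $y\in\qri(\dom I_f)$ directly, by showing $\cl\cone(\mathbf{S}_f-y)$ is all of $L^p$ and then using $\mathbf{S}_f\subset\dom I_f$; Proposition~\ref{prop:subsetqri} is applied once, at the very end. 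Your route delivers more (an explicit element of $\qri(\dom I_f)$, which is exactly what the opening paragraph of Section~\ref{s:existingtheory} asks for) and sidesteps the somewhat delicate finite-dimensional chain \eqref{capequation}. The paper's route, on the other hand, never needs to argue density in $L^p$ and keeps all the interiority reasoning in $\RR^n$, where relative interiors are well behaved.

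One small caveat: your parenthetical appeal to Theorem~\ref{thm:qri_intersections} in the ambient $L^p$ to obtain $y\in\qri\mathbf{S}_f$ is not on solid ground, since that result requires one of the two sets to have nonempty \emph{interior}, and $\inte X_{\leq\beta}=\emptyset$ in $L^p$ for $p<\infty$. Fortunately you do not actually use this; your direct argument---truncate and restrict to the sets where $y$ is uniformly bounded away from $\alpha,\beta$, then approximate any $w\in L^p$ by such truncations---establishes $\cl\cone(\mathbf{S}_f-y)=L^p$ on its own, and the passage to $\dom I_f$ then follows as you say. I would simply drop the reference to Theorem~\ref{thm:qri_intersections} in that sentence.
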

	\begin{proof}
		We have that
		\begin{subequations}\label{capequation}
		\begin{align}
		b=Ay &\in A (\qri X_{\geq \alpha} \cap \qri X_{\leq \beta}),\label{cap1} \\
		 &= A \qri X_{\geq \alpha} \cap A qri X_{\leq \beta}, \label{cap2}\\
		 &= \reli A X_{\geq \alpha} \cap  \reli A X_{\leq \beta}, \label{cap3}\\
		 &=\qri(AX_{\geq \alpha}) \cap \inte(AX_{\leq \beta}), \label{cap4}\\
		 &= \qri(AX_{\geq \alpha} \cap AX_{\leq \beta}), \label{cap5}\\
		 &= \qri(A(X_{\geq \alpha} \cap X_{\leq \beta})), \label{cap6}\\
		 &\subset \qri A \dom I_f \label{cap7}\\
		 &= \core A \dom I_f \label{cap8}.
		\end{align}
		\end{subequations}
		Here the inclusion from \eqref{cap1} is true because of \eqref{qricap} and the condition that $\alpha < y(s) < \beta$, \eqref{cap2} uses the linearity of $A$, and \eqref{cap3} is true by Proposition~\ref{prop:subsetqri}. We have that \eqref{cap4} is true, because $AX_{\geq \alpha}$ and $AX_{\leq \beta}$ are nonempty cones with dimension $n$ in $\mathbb{R}^n$, and so we have that their interiors and relative interiors coincide. We have that \eqref{cap5} is true by Theorem~\ref{thm:qri_intersections}, and \eqref{cap6} again uses the linearity of $A$. Next, \eqref{cap7} holds because $\aff A(X_{\leq \alpha} \cap X_{\leq \beta})= \aff A\dom I_f=\RR^n$ and $X_{\leq \alpha} \cap X_{\leq \beta} \subset \dom I_f$ by Lemma~\eqref{lem:S}. Finally, \eqref{cap8} is true by convexity. Altogether, \eqref{capequation} shows the desired result.
	\end{proof}
	
	The key, then, is to find a $y$ that satisfies \eqref{BLcriterion}. The existence of this $y$ does not immediately follow from Theorem~\ref{thm:BLmaintheorem}, but Theorem~\ref{thm:BLmaintheorem} may be extended by slightly modifying Borwein and Lewis' proof (from \cite[Theorem~2.9]{BL2}) as follows in Theorem~\ref{thm:2sided}.
	\begin{theorem}[A simple extension of {\cite[Theorem~2.9]{BL2}}]\label{thm:2sided}
		Suppose $(T,\mu)$ is a finite measure space and $x \in L^P(T)$ satisfies $\alpha \leq x \leq \beta$ where
		\begin{equation}\label{measure>0}
		\mu(\{t\;|\; \alpha<x(s)<\beta  \}) >0.
		\end{equation}
		Suppose further that $a_i \in L_q(T) i=1,\dots,n$ are pseudo-Haar. Then there exists a $y \in L^\infty(T)$ and $\varepsilon>0$ with $y(t)\geq \varepsilon$ almost everywhere and $\langle x, a_i \rangle = \langle y, a_i \rangle $ for each $i$. 
	\end{theorem}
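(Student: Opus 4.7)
The plan is to adapt the proof of \cite[Theorem~2.9]{BL2} (stated here as Theorem~\ref{thm:BLmaintheorem}) essentially verbatim, with the only structural change being that the set $\{t : \rho(t) > 0\}$ used in the Borwein--Lewis argument is replaced by a two-sided slack set of $x$. Specifically, I would use continuity of measure from below together with \eqref{measure>0} to choose $\delta > 0$ small enough that
$$
E_\delta := \{t \in T : \alpha + \delta \leq x(t) \leq \beta - \delta\}
$$
has positive $\mu$-measure. On $E_\delta$ there is uniform room to perturb $x$ without leaving $[\alpha,\beta]$, and this set plays exactly the role that $\{\rho > 0\}$ does in BL's original proof.

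Next, the pseudo-Haar hypothesis guarantees that $a_1,\ldots,a_n$ remain linearly independent on $E_\delta$. By $L^1$--$L^\infty$ duality this makes the restricted moment map $\Phi : L^\infty(E_\delta) \to \RR^n$, $\Phi(h) := (\int_{E_\delta} h\, a_i\, d\mu)_{i=1}^n$, a bounded linear surjection, since its adjoint $\varphi \mapsto \sum_i \varphi_i a_i$ is injective. Following BL, I would then write the desired function as $y := y_0 + h$, with $y_0$ the positive constant $c$ on $T$ and with $h \in L^\infty(E_\delta)$ extended by zero outside $E_\delta$; I would choose $h$ via surjectivity of $\Phi$ so that $\Phi(h) = (\langle x - y_0, a_i\rangle)_{i=1}^n$. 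Then $\langle y, a_i\rangle = \langle x, a_i\rangle$ is automatic, $y \in L^\infty(T)$ since both pieces are, and on the complement of $E_\delta$ one simply has $y = c$.

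The main obstacle is the quantitative calibration that makes this construction work: the perturbation $h$ depends on $c$ through the defect $(\langle x - y_0, a_i\rangle)_i$, and one must control $\|h\|_\infty$ so that $y = y_0 + h$ remains bounded below by a positive constant a.e.\ on $E_\delta$. This is exactly the step in BL's original proof where the pseudo-Haar assumption does real work: it permits restricting the perturbation to ever-smaller non-null subsets of $E_\delta$ while keeping the moment-matching equations solvable, producing an open-mapping-type bound of the form $\|h\|_\infty \leq K \sum_i |\langle x - y_0, a_i\rangle|$ whose linear-in-$c$ growth can be dominated by $c$ itself once $c$ is chosen large enough relative to the scalars $\langle 1, a_i\rangle$. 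Once this calibration is in hand, the existence of the desired $\varepsilon > 0$ with $y \geq \varepsilon$ a.e.\ is immediate, and the conclusion follows exactly as in \cite[Theorem~2.9]{BL2}.
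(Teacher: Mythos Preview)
Your first two paragraphs are sound and run parallel to the paper: one passes to a two-sided slack set $E_\delta=\{t:\alpha+\delta\le x(t)\le\beta-\delta\}$ of positive measure (the paper calls it $T_1$), and the pseudo-Haar hypothesis makes the restricted moment map $\Phi:L^\infty(E_\delta)\to\RR^n$ surjective. The gap is in the calibration of your last paragraph. With $y_0\equiv c$ the defect to be corrected is $(\langle x,a_i\rangle-c\langle 1,a_i\rangle)_i$, whose size behaves like $c\sum_i|\langle 1,a_i\rangle|$ for large $c$. Your open-mapping bound then gives $\|h\|_\infty\le K\bigl(c\sum_i|\langle 1,a_i\rangle|+O(1)\bigr)$, and on $E_\delta$ you only obtain $y\ge c-\|h\|_\infty\ge c\bigl(1-K\sum_i|\langle 1,a_i\rangle|\bigr)-O(1)$. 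This is eventually positive only when $K\sum_i|\langle 1,a_i\rangle|<1$, and neither $K$ nor the scalars $\langle 1,a_i\rangle$ is at your disposal; restricting the perturbation to smaller non-null subsets of $E_\delta$ only \emph{increases} $K$. So ``choose $c$ large'' does not close the argument, and this is not how the pseudo-Haar assumption is used in \cite{BL2}.

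What the paper (following Borwein--Lewis) actually does is take $y_0:=x_m$, the two-sided truncation of $x$ to $[1/m,1-1/m]$ after normalizing $\alpha=0$, $\beta=1$. Since $x_m\to x$ in $L^p$, the defect $(\langle x_m-x,a_i\rangle)_i$ tends to $0$; for $m$ large it lies in $\Phi(\{u:\|u\|_\infty<\delta/2\})$, which is an open neighbourhood of $0$ in $\RR^n$. One then sets $y=x_m-v$ with $v$ supported on $E_\delta$ and $\|v\|_\infty<\delta/2$. On $E_\delta$ one has $x_m=x\in[\delta,1-\delta]$ (since $m>1/\delta$), hence $y\in[\delta/2,1-\delta/2]$ there, while off $E_\delta$ one has $y=x_m\in[1/m,1-1/m]$. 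The essential idea you are missing is that the defect is made \emph{small} first, by truncating $x$, and only then is surjectivity of $\Phi$ invoked; one does not try to absorb a large defect by inflating a constant base function.
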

\begin{proof}
	Without loss of generality, let $0 = \alpha, 1 = \beta$. Since $\mu(\{t\;|\; 0<x(s)<1  \}) >0$, there exists $T_1 \subset T$ with $\mu(T_1) > 0$ and a $\delta>0$ such that $x(t) \geq \delta$ almost everywhere on $T_1$. We claim
	\begin{equation}\label{BL2.10}
	C1:=\left \{\left(\int_{T_1}ua_i d\mu \right)_{i=1}^n  \;|\; u \in L^\infty (T_1) \right \} = \RR^n.
	\end{equation}
	Suppose to the contrary. As $C1$ is a subspace and is linearly dependent on $\RR^n$, there exists a linear function $\lambda \in \RR^n$ with $C1$ contained in its nullspace. This means that
	\begin{equation*}
	\sum_{i=1}^n \lambda_i \int_{T_1}ua_i d\mu = 0 \quad \text{for\;all}\;\; u \in L^\infty (T_1).
	\end{equation*}
	Thus we have that $\sum_{i=1}^n \lambda_i a_i(t) = 0$ almost everywhere on $T_1$. This is a contradiction, because the $a_i$ are pseudo-Haar, and so they are linearly independent on $T_1$. Thus \eqref{BL2.10} holds. Now define
	\begin{equation*}
	C2 = \left \{ \left(\int_{T_1} ua_i d\mu \right)_{i=1}^n \;|\; u \in L^\infty(T_1),\;\; \|u\|_\infty < \delta/2    \right \}.
	\end{equation*}
	Since, by \eqref{BL2.10}, $\cone C2= \RR^n$, we have that $0 \in \inte C2$ by \cite[Corollary~6.4.1]{Roc70}. Define the sequence $(x_m)_{m=3}^\infty \subset L^\infty (T) $ by 
	\begin{equation*}
	x_m(t) = \begin{cases}
	1-\frac{1}{m} &\text{if}\;\; x(t)>1-\frac{1}{m};\\
	x(t) & \text{if}\;\; \frac{1}{m} \leq x(t) \leq 1-\frac{1}{m};\\
	\frac{1}{m} & \text{if}\;\; x(t)< \frac{1}{m}.
	\end{cases}
	\end{equation*}
	For $p< \infty$ we have 
	\begin{align*}
	\|x_m-x\|_p^p &\leq \int_{\{t\;|\;x(t)>1-\frac{1}{m} \}} \left(1-\left(1-\frac{1}{m}\right)\right)^p d\mu+ \int_{\{t\;|\; x(t)<\frac{1}{m} \}}\left(\frac{1}{m}\right)^p d\mu\\
	&= \int_{\{t\;|\;  \leq x(t) \notin \left[1/m,1-1/m \right]  \}} \left(\frac{1}{m} \right)^p d\mu\\
	&\rightarrow  0 \;\;\text{as}\;\; m \rightarrow \infty.
	\end{align*}
	Otherwise, in the $p=\infty$ case, we have $\|x_m-x\|_\infty \leq 1/m$ for all $m$. Thus we have that $(\langle x_m - x, a_i\rangle)_{i=1}^m \rightarrow 0$ as $m \rightarrow \infty$, and so, since $0 \in \inte C2$, we have that for $m>1/\delta$ sufficiently large, $(\langle x_m -x, a_i \rangle)_{i=1}^n \in C2$. Thus we may find $v \in L_\infty(T)$ such that $\langle x_m-x,a_i \rangle = \langle v,a_i \rangle$ for $i=1,\dots,n$ and $\|v\|_\infty < \delta/2$ with $v(t)=0$ almost everywhere on $T_1^\complement$. Set $y=x_m -v$. Then we have $y\in L^\infty (T)$ and $y(t)=x_m(t)\in \left[1/m,1-1/m\right]$ almost everywhere on $T_1^\complement$. On $T_1$ we have $1-\frac{1}{m}\geq 1-\delta \geq x(t) \geq \delta >1/m$ almost everywhere, so $x_m(t) \in \left[\delta,1-\delta \right]$ almost everywhere. Since $v(t) \leq \delta/2$ almost everywhere, we have that $y(t) \in \left[\delta/2,1-\delta/2 \right]$ almost everywhere on $T_1$. Altogether we have that $y \in \left[ \min \{\delta/2,1/m \},1-\min \{\delta/2,1/m \} \right]$ almost everywhere on $T$. Finally, since $y=x_m-v$ and $\langle x_m-x,a_i \rangle = \langle v,a_i \rangle$ for $i=1,\dots,n$ we have that
	$$
	\text{for}\;i=1,\dots,n,\;\;\langle y,a_i \rangle = \langle x_m -v , a_i \rangle = \langle x_m,a_i \rangle + \langle v,a_i \rangle = \langle x_m,a_i \rangle - \langle x_m-x,a_i \rangle = \langle x,a_i \rangle.
	$$
	This concludes the result.	
\end{proof}

While Theorem~\ref{thm:2sided} is a straightforward extension of \cite[Theorem~2.9]{BL2}, it is not actually proven therein. Borwein and Lewis simply point out \cite[Example~5.6(vi)]{BL2} that, in such cases, the problem reduces to finding a $y$ that satisfies \eqref{BLcriterion}. In their later work \cite{borwein1993partially}, Borwein and Lewis list the extension from Theorem~\ref{thm:2sided} as the primal constraint qualification $(PCQ_3)$, and they attribute the first proof to \cite{lewispseudo}, which may be an early reference to Lewis' detailed exposition on the consistency of moment systems \cite{lewis1995consistency}.

To conclude our discussion of how the quasi-relative interior approach here relates to our approach to developing the constraint qualification by using the definition of the core, we have the following proposition that shows how \eqref{measure>0} implies the existence of our interval $\left[ \zeta_1,\zeta_2 \right]$ from Theorem~\ref{thm:main}.

	\begin{proposition}\label{prop:sigma_algebra}
	Let $\mu \left( \left \{x \;|\; \alpha<f(x)<\beta \right \} \right) >0$. Then there exists $\epsilon,\delta >0$ such that 
	\begin{equation}\label{eqn:contradiction}
	\mu \left( \left \{x \;|\;  \alpha+\epsilon \leq f(x)\leq \beta-\epsilon \right \} \right) = \delta > 0.
	\end{equation} 
\end{proposition}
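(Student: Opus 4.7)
The plan is to realize the open-type level set $\{x \mid \alpha < f(x) < \beta\}$ as a countable monotone union of the closed-type level sets $\{x \mid \alpha + 1/n \leq f(x) \leq \beta - 1/n\}$, and then invoke continuity of measure from below to transfer positivity of the measure from the union to one of its members.

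Concretely, I would fix $N$ large enough that $2/N < \beta - \alpha$ and, for each integer $n \geq N$, set
\begin{equation*}
E_n := \left\{ x \;\middle|\; \alpha + \tfrac{1}{n} \leq f(x) \leq \beta - \tfrac{1}{n} \right\}.
\end{equation*}
Each $E_n$ is measurable (as a preimage of a closed interval under the measurable $f$), the sequence is nested with $E_n \subseteq E_{n+1}$, and one checks directly from the definition that
\begin{equation*}
\bigcup_{n \geq N} E_n = \{x \mid \alpha < f(x) < \beta\}.
\end{equation*}
The hypothesis asserts that the right-hand side has positive measure.

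By continuity of $\mu$ from below applied to the increasing sequence $(E_n)$, we get $\mu(E_n) \nearrow \mu\bigl(\bigcup_n E_n\bigr) > 0$. Hence there exists some $n_0 \geq N$ with $\mu(E_{n_0}) > 0$. Setting $\epsilon := 1/n_0$ and $\delta := \mu(E_{n_0})$ then yields exactly the assertion \eqref{eqn:contradiction}. I do not anticipate any real obstacle here; the only caveat is the implicit assumption that $f$ is measurable (so that the level sets lie in the relevant $\sigma$-algebra), which is in force throughout the paper's setting.
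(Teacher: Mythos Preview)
Your proposal is correct and takes essentially the same approach as the paper: both write $\{x \mid \alpha < f(x) < \beta\}$ as the countable increasing union of the sets $\{x \mid \alpha + \epsilon_n \leq f(x) \leq \beta - \epsilon_n\}$ and then use a standard measure-theoretic limit argument to conclude one of them has positive measure. The only cosmetic difference is that the paper argues by contradiction via countable additivity on the disjointified pieces, whereas you invoke continuity from below directly; your version is, if anything, slightly cleaner.
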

\begin{proof}
	Suppose for a contradiction that such a $(\epsilon,\delta)$ pair does not exist. Let
	\begin{alignat*}{2}
	&& \epsilon_n :=& \frac{\beta-\alpha}{2^{n}}\quad n \in \mathbb{N};\\
	\text{}\quad&& A_n:=& \left \{x \in T\;|\; \alpha+\epsilon_n \leq f(x) \leq \beta-\epsilon_n  \right \};\\
	\text{and}\quad && B_n:=& A_{n+1}\setminus \cup_{j=1}^n A_{j}, \quad n \in \mathbb{N}.
	\end{alignat*}
	Since we have supposed that \eqref{eqn:contradiction} does not hold for any $(\epsilon,\delta) \in \mathbb{R}_{++}^2$, we have that $\mu(A_n) = 0$ for all $n$, and so $\mu(B_n) = 0$ for all $n$. Since $(B_n)_{n \in \mathbb{N}}$ is a disjoint, countable family of members of a $\sigma$-algebra,
	\begin{equation}\label{measurespace1}
	\mu \left(\bigcup_{n=1}^\infty B_n \right) = \sum_{n=1}^\infty \mu (B_n) = 0.
	\end{equation}
	See, for example, \cite{Brezis}. However, we also have that
	\begin{equation}\label{measurespace2}
	\bigcup_{n=1}^\infty B_n = \left \{x \;|\;  \alpha \leq f(x)\leq \beta \right \}.
	\end{equation}
	Together, \eqref{measurespace1} and \eqref{measurespace2} force
	$$
	\mu \left( \left \{x \;|\; \alpha<f(x)<\beta \right \} \right) =0,
	$$
	which is a contradiction.
\end{proof}
	From Proposition~\ref{prop:sigma_algebra}, it is clear why the constraint qualification conditions of Theorem~\ref{thm:2sided} are more restrictive than those of Theorem~\ref{thm:main} for the problem \eqref{Iprimalproblem}. We only require that $a_1,\dots,a_n$ be linearly independent on one set: $\left[ \zeta_1,\zeta_2 \right]$. 
	We gain different insights through the two different approaches to the problem. By working in the core, we obtained a sufficiency condition that is no more restrictive in practice than the state-of-the-art in the literature. Moreover, we obtain it with very little scaffolding. 

\section{A Computed Example}\label{s:example}

Consider the pulse from Examples~\ref{ex:pulse} and \ref{ex:pulse2}. Theorem~\ref{thm:main} admits the use of new moment functions for solving \eqref{Iprimalproblem}, provided that the moment functions are linearly independent on some nonempty $\left[\zeta_1,\zeta_2\right] \subset \left[0,1/2 \right]$. In particular, let
\begin{equation}\label{piecewiseaj}
a_i(t) = \begin{cases}
t^{i-1} & \text{if}\;\; t \leq 1/2;\\
1 & \text{if}\;\; t \geq 1/2,
\end{cases}\quad \text{for}\;\; j=1,\dots,n.
\end{equation}

We compute with $f$ as the negative translated Boltzmann--Shannon entropy, and $\tau = 1$. In this case, $\left[\zeta_1,\zeta_2 \right]$ is any subset of $\left[0,1/2\right]$. In Figure~\ref{fig:computed_example}, we show the solution obtained by computing with piecewise-defined $a_i$ in \eqref{piecewiseaj}. For comparison, we also show the solution obtained when the $a_i:t \rightarrow t^{i-1}$ are monomials on the entire interval $\left[0,1\right]$. The latter are the moments used in \cite{BL2018,BL2016,ScottPhD}, to which the reader is referred for more details about computation. At left, we see that the use of the piecewise $a_i$ apparently tames the observed Gibbs phenomenon near $1/2$. At right, we rescale for clearer comparison.

\begin{figure}
	\begin{center}
	\adjustbox{trim={.1\width} {.43\height} {.2\width} {0.08\height},clip}
	{\includegraphics[width=.65\textwidth]{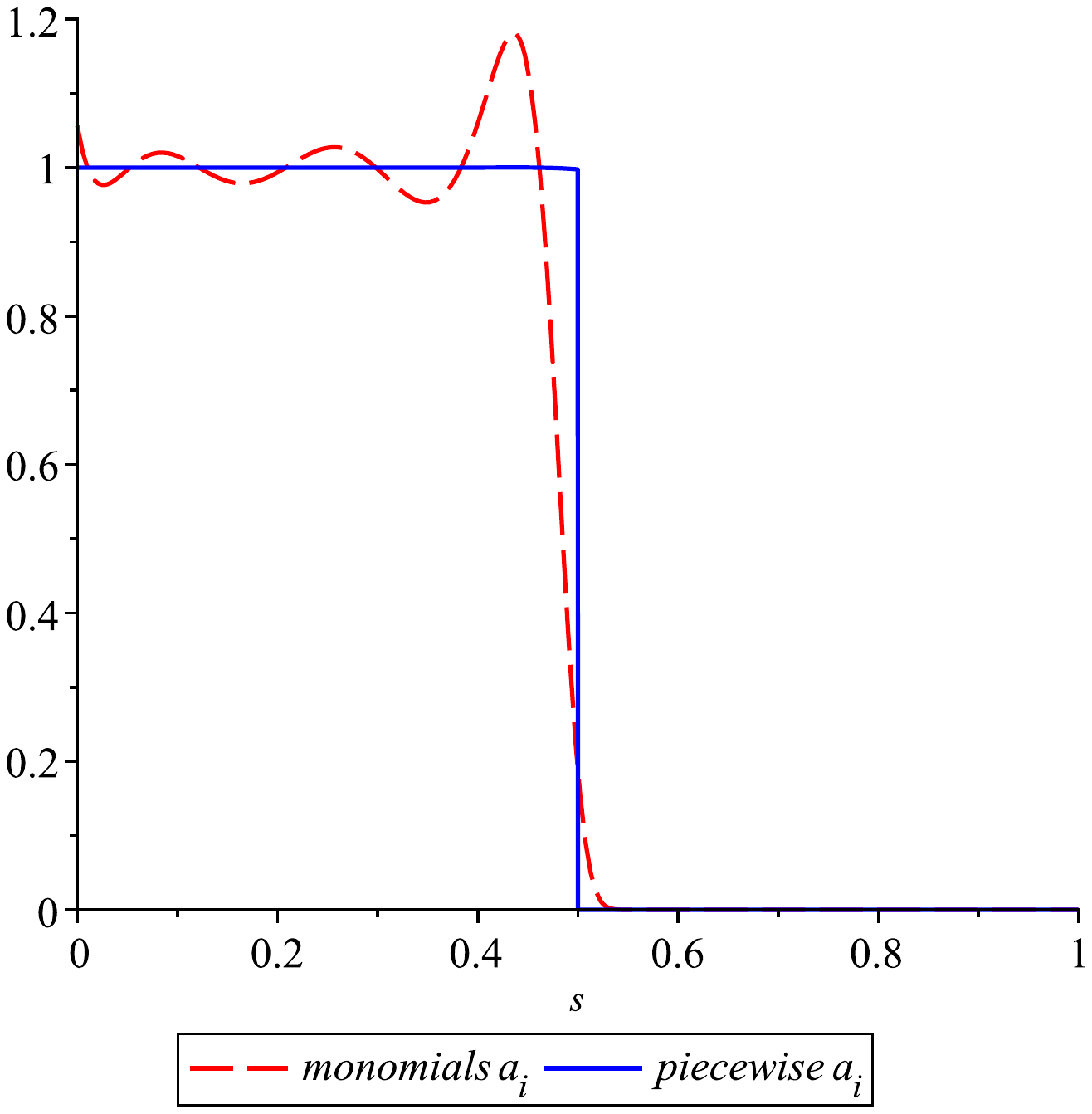}}	
	\adjustbox{trim={.1\width} {.43\height} {.2\width} {0.08\height},clip}
	{\includegraphics[width=.65\textwidth]{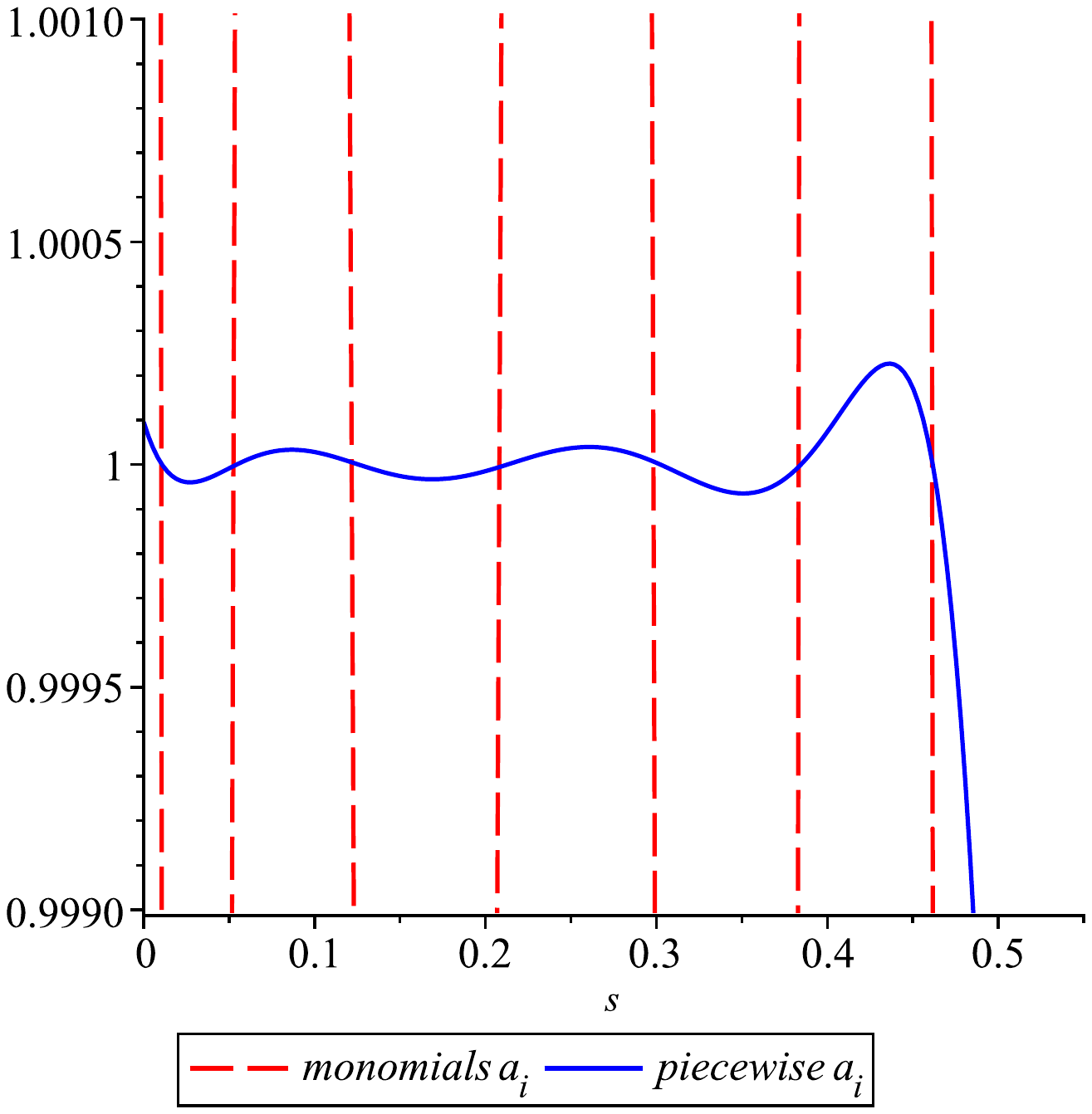}}		
	\end{center}
\caption{The computed example from Section~\ref{s:example}.}\label{fig:computed_example}
\end{figure}

\section{Conclusion}\label{s:conclusion}

Let us compare and contrast the two approaches to finding easy-to-check conditions for strong duality. In Section~\ref{s:newresults}, we started with a direction vector $\eta \in \RR^n$ and an $x$ that satisfied 
\begin{equation}\label{xfeasible}
x: [\zeta_1 , \zeta_2] \rightarrow [\varepsilon_1 , \varepsilon_2] \subset \; ]\alpha,\beta [ \subset \dom I_f,
\end{equation}
and then we showed how we may construct a function $x+y$ and $t>0$ that satisfies $x+y \in \dom f$ and  $A(x+y)=b+t\eta$. This shows that $b \in \core A \dom I_f$. 

In Section~\ref{s:existingtheory}, we started with a function that satisfies \eqref{eqn:contradiction} and so also satisfies \eqref{xfeasible} by Proposition~\ref{prop:sigma_algebra}. We then show how to construct another function $y$ that satisfies $Ay=b$ with $y \in \qri \dom I_f$, which guarantees $b \in \core A \dom I_f$ by Theorem~\ref{thm:qri}. 

The proof in Section~\ref{s:newresults} has two main advantages. The first is that it provides an interiority condition that is easy to understand from a geometric standpoint. We simply look for the interval $\left[ \zeta_1,\zeta_2 \right]$ and then can safely add a function $y$ to the function $x$ on this interval while keeping $x+y \in \dom I_f$. The second advantage is that this geometric construction makes clear why we only need the $a_i$ to be linearly independent on $\left[\zeta_1,\zeta_2 \right]$.

The proof in Section~\ref{s:existingtheory} has a significant theoretical advantage of its own, which is that it shows how one can construct a function $y$ that lives in $\qri \dom I_f$. The two approaches are related by Proposition~\ref{prop:sigma_algebra}, which shows how we may find the set $T_1$ by simply taking $T_1:=\left[\zeta_1,\zeta_2\right]$, the existence of the latter being guaranteed. Revisiting the proof of Theorem~\ref{thm:2sided}, we realize that we have only used the pseudo-Haar assumption to guarantee linear independence of the $a_i$ on the set $T_1 = \left[\zeta_1,\zeta_2\right]$, and so we only ever needed the linear independence on $T_1$ to begin with.

The study of which properties of relative interiors in finite dimensions will extend to quasi-relative interiors in infinite dimensions is an active area of current research. This study of the entropy minimization problem illustrates the insights that are to be gained by examining problems that are partially finite, and approaching them from both their finite-dimensional and infinite-dimensional sides.

	\bibliographystyle{plain}
	\bibliography{bibliography}

\end{document}